\documentclass[11pt]{amsart}

\usepackage[margin=2cm]{geometry}
\usepackage{amsmath, amsthm, xcolor}
\usepackage{amssymb}
\usepackage{graphicx,soul}
\usepackage[normalem]{ulem}
\usepackage{relsize}
\usepackage{enumerate}

\newcommand{\hidden}[1]{}

\usepackage{color}
\usepackage[mathscr]{euscript}
\usepackage{tikz}

\numberwithin{equation}{section}

\newcommand{\R}{{\Bbb R}}

\newcommand{\Z}{{\Bbb Z}}
\newcommand{\N}{{\Bbb N}}

\newtheorem{theorem}{Theorem}[section]

\newtheorem{corollary}{Corollary}

\theoremstyle{remark}
\newtheorem{remark}{{Remark}}

\def\N{\mathbb N}

\def\R{\mathcal R}

\def\Z{\mathbb Z}

\def\Z{\mathbb Z}

\newtheorem{lem}{\bf Lemma}[section]

\def\N{\mathbb N}
\def\N{\mathbb N}

\def\R{\mathbb R}
\def\Z{\mathbb Z}

\newcommand{\dimH}{\dim_{H}}
\begin{document}

\title{The Cartesian product of exact approximation sets}

\author{Jing Guo}
\address{Jing Guo, School of Mathematics and Statistics, Huazhong University of Science and Technology, Wuhan 430074, P. R. China}
\email{guojing\_math@hust.edu.cn}

\author[Mumtaz Hussain]{Mumtaz Hussain}
\address{Mumtaz Hussain, Department of Mathematical and Physical Sciences,  La Trobe University, Bendigo 3552, Australia. }
\email{m.hussain@latrobe.edu.au}

\author{Bixuan Li}
\address{Bixuan Li, Institute of Mathematics, Academy of Mathematics and System Science, CAS, Zhongguancun East Road 55, Beijing 100190, P. R. China}
\email{math\_forever@163.com}

\author[Johannes Schleischitz]{Johannes Schleischitz}
\address{Johannes Schleischitz, School of Computer, Data, and Mathematical Sciences, Western Sydney University, NSW, Australia}
\email{J.Schleischitz@westernsydney.edu.au}

\date{}

\begin{abstract}

We determine the Hausdorff and packing dimensions of Cartesian products of one-dimensional exact approximation sets. Our main result establishes the exact-approximation counterpart of the recent product theorem of Wang and Wu (2024) for limsup approximation sets, showing that passing to the substantially smaller exact approximation sets (liminf sets) does not reduce the Hausdorff dimension of the Cartesian product.  
One of the key ingredients is a refinement of the well-distributed-system framework of Bandi--Ghosh--Nandi (2023) by exploiting the fine arithmetic distribution of rational points which then gives the Hausdorff dimension of the product set under a weaker convergence condition. 

\end{abstract}
 \maketitle
\section{Exact approximation and Cartesian products}

\subsection{Introduction and the main result}
Let $\psi:\N\to(0,\infty)$ be a non-increasing function.  We write
\[
W(\psi)=\left\{x\in[0,1):\left|x-\frac pq\right|<\psi(q)
\text{ for infinitely many }(p,q)\in\Z\times\N\right\}
\]
for the usual limsup set of $\psi$-approximable numbers.  The associated exact approximation order set is defined as
\[
E(\psi)=W(\psi)\setminus\bigcup_{0<c<1}W(c\psi).
\]
Thus, $x\in E(\psi)$ if and only if $x$ is approximable at the scale $\psi(q)$, but not at any smaller scale $c\psi(q)$ with $c<1$.  Exact approximation sets are therefore finer objects than the classical limsup sets. The study of exact approximation sets was initiated by Bugeaud \cite{Bugeaud2003,Bugeaud2008, BugeaudMoreira2011}, who proved that, under mild regularity assumptions on \(\psi\), 
we have 
\begin{equation} \label{eq:gleichdim}
    \dimH E(\psi)=\dimH W(\psi).
\end{equation}
His work was motivated by earlier results of Beresnevich, Dickinson, and Velani \cite{BeresnevichDickinsonVelani}, who investigated related sets in which the upper and lower approximation rates are separated by a logarithmic gap. More recent developments include the general framework of Bandi, Ghosh, and Nandi \cite{BandiGhoshNandi}, the higher-dimensional results of Bandi and de Saxc\'e \cite{BandiDeSaxce2025}, the work of Schleischitz \cite{Joh2023RM,Joh2025MZ}, and the quantitative refinements introduced by Baker and Ward \cite{BakerWard2025}. Related but slightly larger sets  $W(\psi)\setminus\bigcap_{0<c<1}W(c\psi)$  have also been investigated recently in \cite{KLWZ}. Therein it was proven that the Hausdorff dimension of this set is equal to that of $W(\psi)$ for the case $\psi(q):=q^{-\tau}$ with $\tau>2.$ Their result also extends to arbitrary dimension.
Since the metric theory of the sets $E(\psi)\subset \R$ is rather well-understood by the work quoted above, the focus of this paper is to study Cartesian products of these sets. A basic problem in fractal geometry is to understand how Hausdorff dimension behaves under
Cartesian products. For arbitrary sets $A,B\subset\R$, Marstrand proved that
\begin{equation}\label{eq:marstrand}
\dimH(A\times B)\geq \dimH A+\dimH B,
\end{equation}
and Tricot complemented this with certain
other estimates of this type involving
packing dimension \cite{Tricot1982, Marstrand}.
While for many interesting types of sets
there is equality in \eqref{eq:marstrand},
it is well-known that strict inequality can occur.
A striking illustration goes back to Erd\"{o}s, who proved that every real number is a sum of
two Liouville numbers, and every nonzero real number is a product of two Liouville numbers
\cite{Erdos1962}. Consequently, although the set of Liouville numbers has Hausdorff dimension zero, its
Cartesian square nevertheless has Hausdorff dimension one. 

Recent work, building up to some extent on Erd\"os paper, shows that indeed
strict inequality in \eqref{eq:marstrand} is a frequent
phenomenon for sets naturally arising in Diophantine approximation. In
particular, Schleischitz studied sumsets and Cartesian products for classes determined by fixed
irrationality exponents and generalisations
to
exact approximation, while Wang and Wu
established a general product principle for limsup sets in Diophantine approximation
\cite{Joh2023RM,WangWuTAMS}.

This naturally leads to the problem of determining the Hausdorff dimension of Cartesian products of exact approximation sets, which is the primary focus of the present paper. As a byproduct we are also able to evaluate the packing dimension.

\begin{theorem}\label{thm:main}
Let $\psi_1,\ldots,\psi_m:\N\to(0,\infty)$ be non-increasing functions such that
\begin{align}\label{condithm1}
\psi_i(q)=o(q^{-2}),
\qquad i=1,\dots,m,
\end{align}
and
\begin{align}\label{condithm2}
\sum_{q=1}^{\infty} q\psi_i(q)<\infty,
\qquad i=1,\ldots,m.
\end{align}
Then
\begin{equation} \label{eq:e1}
\dimH\bigl(E(\psi_1)\times\cdots\times E(\psi_m)\bigr)
=
m-1+\min_{1\leq i\leq m}\dimH E(\psi_i).
\end{equation}
Consequently
\begin{equation} \label{eq:packingd}
  \dim_P\bigl(E(\psi_1)\times\cdots\times E(\psi_m)\bigr)
=
m.
\end{equation}
\end{theorem}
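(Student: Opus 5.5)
Write $E_i=E(\psi_i)$ and $s_i=\dimH E_i$. Condition \eqref{condithm2} together with monotonicity gives $s_i=\dimH W(\psi_i)=2/\lambda_i$, where $\lambda_i=\liminf_{q\to\infty}\frac{-\log\psi_i(q)}{\log q}\ge 2$ is the lower order. This is the one-dimensional result \eqref{eq:gleichdim}, and I take it as known.

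\textbf{Upper bound.} Order the factors so that $s_1=\min_i s_i$. Then
\[
E_1\times\cdots\times E_m\subset W(\psi_1)\times[0,1)^{m-1},
\]
and for any set $A$ one has $\dimH(A\times[0,1)^{m-1})\le \overline{\dim}_B\ldots$. That bound is too weak, so I would use the sharper product inequality $\dimH(A\times B)\le \dimH A+\dim_P B$ (Tricot) with $A=E_1$ and $B=E_2\times\cdots\times E_m\subset[0,1)^{m-1}$, whose packing dimension is at most $m-1$. This gives $\le m-1+s_1$. For \eqref{eq:packingd}, the same Tricot inequality in the form $\dim_P(A\times B)\ge \dimH A+\dim_P B$, iterated, shows it suffices to prove $\dim_P E_i=1$ for each $i$.

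\textbf{Packing dimension of each factor.} I would show that $E_i$ is dense in $[0,1)$ and that $E_i\cap U$ has upper box dimension $1$ for every open $U$. Concretely, I would build a Cantor-type subset of $E_i$ in which long stretches of unrestricted digits (free continued fraction partial quotients, or free intervals at scale) alternate with sparse forced approximations $|x-p_k/q_k|\approx\psi_i(q_k)$. Since $\psi_i(q)=o(q^{-2})$, these forced approximations can be inserted at very sparse times $q_k$. Choosing the $q_k$ rapidly growing makes the lower local dimension small but the upper local dimension equal to $1$ everywhere. By the mass distribution principle for packing dimension, this gives $\dim_P E_i=1$, and since packing dimension is at most $m$, we get \eqref{eq:packingd}.

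\textbf{Lower bound for $\dimH$.} This is the main step. I would construct a Cantor subset $K=K_1\times\cdots\times K_m$ with $K_i\subset E_i$ and a product measure $\mu=\mu_1\times\cdots\times\mu_m$, all built along a \emph{common} sequence of scales. At each stage $k$ exactly one coordinate $j(k)$ is forced to make an exact approximation: its points are confined to neighbourhoods of width $\asymp\psi_{j}(q)$ around rationals with $q\approx Q_k$, chosen with the precise constant so that exactness holds. The condition $\psi=o(q^{-2})$ and $\sum q\psi(q)<\infty$ are used here to rule out additional better approximations; by Borel--Cantelli for the measure, almost every point approximates only at the prescribed scales. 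Meanwhile the other coordinates stay essentially Lebesgue (full) between their own rare forced stages. Rotating the forced coordinate with super-exponentially growing $Q_k$ means that at any scale $r$ at most one coordinate is "thin". The local dimension of $\mu$ at scale $r$ is then about $(m-1)+\dim$ of the thin coordinate's Cantor measure at that scale. For that coordinate I would choose $Q_k$ along a subsequence realising $\lambda_j$, giving a local exponent $\ge 2/\lambda_j\ge s_1$. Applying the mass distribution principle yields $\ge m-1+\min_i s_i$.

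\textbf{Main obstacle.} The real difficulty is the single-coordinate estimate: $\mu_j(B(x,r))\lesssim r^{s_j-\epsilon}$ uniformly at intermediate scales while keeping exactness. This requires enough well-separated rationals $p/q$, $q\approx Q$, in every subinterval, with $q$ coprime and the neighbourhoods of width $\psi(q)$ disjoint. Here I would use a well-distributed/ubiquity system in the style of Bandi--Ghosh--Nandi, refined by counting reduced fractions in short intervals: for an interval of length $\ell\gg Q^{-2}$ the count is $\asymp \ell Q^2$. The summability $\sum q\psi(q)<\infty$ replaces their stronger growth hypothesis when bounding the measure of the excluded "too-good" neighbourhoods.
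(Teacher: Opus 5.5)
Your upper bound and the architecture of your Hausdorff lower bound match the paper. The shared ingredients are: one coordinate forced per stage in rotation, super-exponentially growing $Q_k$ along a subsequence realising $\lambda_j$, well-distributed reduced fractions, and $\sum q\psi(q)<\infty$ to control the too-good neighbourhoods. The paper handles that last point by deterministic pruning via its Counting Lemma rather than by Borel--Cantelli. That lemma is essentially the estimate your Borel--Cantelli step would need anyway, because what must be summable is the $\mu$-mass, not the Lebesgue mass, of $\bigcup_p B(p/q,c\psi(q))$ for $q$ just below the next forced denominator.

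\textbf{The gap is the packing step.} Tricot's inequality $\dim_P(A\times B)\ge\dimH A+\dim_P B$, iterated, only gives $\dim_P(E_1\times\cdots\times E_m)\ge \dim_P E_m+\sum_{i<m}s_i$. Feeding in your Hausdorff result for $m-1$ factors gives at best $m-1+\min_{i<m}s_i$. Both are $<m$ as soon as some $s_i<1$, e.g.\ $\psi_i(q)=q^{-3}$ for all $i$. So knowing $\dim_P E_i=1$ for every factor does not suffice, because packing dimension is not additive. Sets of packing dimension $1$ that are thick only along disjoint sequences of scales can have a product of packing dimension $1$. Your one-dimensional constructions choose the sparse times independently in each coordinate, so they give no control over whether the free stretches overlap. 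The paper explicitly notes that the case $m=1$ does not yield general $m$.

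\textbf{How to fix it.} The repair is cheap and makes your one-dimensional packing construction unnecessary. One option is to follow the paper. Apply \eqref{eq:e1} to the $m+1$ factors $E(\psi_1),\dots,E(\psi_m),E(\psi_{m+1})$ with $\psi_{m+1}(q)=e^{-q}$, which satisfies both hypotheses and has $\dimH E(\psi_{m+1})=0$. Then use the other Tricot inequality to get $m=\dimH\bigl(A\times E(\psi_{m+1})\bigr)\le \dim_P A+\dimH E(\psi_{m+1})=\dim_P A$ for $A=E(\psi_1)\times\cdots\times E(\psi_m)$. The other option is to use your own product measure directly. At scales $r\approx Q_k^{-2}$, just before the forced coordinate is squeezed to width $\asymp\psi_j(q)$, every coordinate still looks Lebesgue, and the accumulated density factor is negligible by the super-exponential growth. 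Hence $\mu(B(x,r))\lesssim r^{m-\epsilon}$ at those scales, the upper local dimension of $\mu$ is $m$ at $\mu$-almost every point, and the packing version of the mass distribution principle gives $\dim_P\ge m$.
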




The packing dimension claim \eqref{eq:packingd} follows by applying
\eqref{eq:e1} with \(m+1\) factors and then using Tricot's product
inequality~\cite{Tricot1982}
\[
\dim_H(A\times B)\leq \dim_P(A)+\dim_H(B).
\]
Indeed, take
\[
A=E(\psi_1)\times\cdots\times E(\psi_m)
\qquad\text{and}\qquad
B=E(\psi_{m+1}),
\]
where \(\psi_{m+1}(q)=e^{-q}\). The latter decays sufficiently rapidly
that its contribution to the minimum in \eqref{eq:e1} is zero. Hence,
\eqref{eq:e1}, together with the preceding product inequality, yields
\[
\dim_P(A)
\geq
\dim_H(A\times B)-\dim_H(B),
\]
which gives \eqref{eq:packingd}.
Note that \eqref{eq:packingd} does not appear to have been known in
this generality even in the one-dimensional case \(m=1\); for sufficiently
rapidly decreasing approximation functions, a related conclusion follows
from~\cite[Corollary~7]{Joh2023RM}. Moreover, the case \(m=1\) does not by
itself yield the result for arbitrary \(m\), since packing dimension is not
additive under Cartesian products. In general, one only has Tricot's
inequality~\cite{Tricot1982}
\[
\dim_P(A\times B)\leq \dim_P(A)+\dim_P(B).
\]

Theorem~\ref{thm:main} should be compared with the recent dimension
formula of Wang and Wu~\cite{WangWuTAMS} for Cartesian products of the
larger limsup sets \(W(\psi)\). Their result implies, without assuming
either \eqref{condithm1} or \eqref{condithm2}, that
\begin{equation}\label{eq:e2}
\dimH\bigl(W(\psi_1)\times\cdots\times W(\psi_m)\bigr)
=
m-1+\min_{1\le i\le m}\dimH W(\psi_i).
\end{equation}
The theorem of Wang and Wu applies to general
limsup sets.
Given the additional
assumptions \eqref{condithm1} and \eqref{condithm2}, formula
\eqref{eq:e1} shows that replacing the limsup sets \(W(\psi_i)\) by the
smaller exact approximation sets \(E(\psi_i)\) does not alter the
Hausdorff dimension of their Cartesian product. To see this, note first that in both \eqref{eq:e1} and \eqref{eq:e2} the
upper bound is immediate, for example from Tricot's product inequalities. 
 On the other hand, by \eqref{eq:gleichdim},
which in fact does not require the convergence assumption \eqref{condithm2}
from Theorem~\ref{thm:main}, the right-hand sides of
\eqref{eq:e1} and \eqref{eq:e2} coincide, and therefore so do the
left-hand sides.

\begin{remark}
Note that, Bandi--Ghosh--Nandi~\cite{BandiGhoshNandi} developed a general framework for exact
approximation. Their
result applies to arbitrary regular metric spaces equipped with
well-distributed systems and recovers classical rational approximation on
the real line as a special case after expressing approximation in terms of
an appropriate radius function. To be precise, restricting to our classical setting of rational approximations to real numbers, their assumptions translate to $$\sum_{q\ge 1}q^3\psi(q)<\infty$$
whereas we assume \eqref{condithm2} which is clearly weaker. 

\end{remark}

Combining Theorem~\ref{thm:main} with the one-dimensional dimension formula for the exact
approximation sets due to Bugeaud and Moreira~\cite{BugeaudMoreira2011} yields an explicit corollary in terms of the
lower order at infinity. We treat $2/\infty=0$
in the claim.

\begin{corollary}\label{cor:lowerorder}
With notation as above, for $i=1,\ldots,m$, let
$\psi_i$ be decreasing and with
\begin{equation}  \label{eq:loword}
\lambda_i:=\liminf_{q\to\infty}\frac{-\log \psi_i(q)}{\log q},
\end{equation}
and assume that $\lambda_i>2$ for each $i$.
Then
\[
\dimH\bigl(E(\psi_1)\times\cdots\times E(\psi_m)\bigr)
=
m-1+\min_{1\leq i\leq m}\frac{2}{\lambda_i}, \qquad
\dim_P\bigl(E(\psi_1)\times\cdots\times E(\psi_m)\bigr)
=m.
\]
In particular, the formula holds for $\psi_i(q)=q^{-\lambda_i}$ with $\lambda_i>2$.
\end{corollary}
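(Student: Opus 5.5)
The corollary follows by checking that the hypotheses of Theorem~\ref{thm:main} hold when $\lambda_i>2$, and then inserting the one-dimensional formula of Bugeaud and Moreira~\cite{BugeaudMoreira2011} into \eqref{eq:e1}. No new geometric input is needed; all the work is in Theorem~\ref{thm:main}.

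\emph{Step 1: the hypotheses \eqref{condithm1} and \eqref{condithm2}.} Fix $i$ and choose a finite $\mu_i$ with $2<\mu_i<\lambda_i$; this is possible also when $\lambda_i=\infty$. By the definition \eqref{eq:loword} of $\lambda_i$ as a $\liminf$, there is $q_0$ such that $-\log\psi_i(q)/\log q>\mu_i$ for all $q\ge q_0$. Hence $\psi_i(q)<q^{-\mu_i}$ for $q\ge q_0$. Since $\mu_i>2$, this gives $\psi_i(q)=o(q^{-2})$, which is \eqref{condithm1}. It also gives
\[
\sum_{q\ge q_0} q\psi_i(q)\le\sum_{q\ge q_0} q^{1-\mu_i}<\infty,
\]
because $\mu_i-1>1$, which is \eqref{condithm2}. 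Monotonicity of $\psi_i$ is assumed. Theorem~\ref{thm:main} therefore applies and yields
\[
\dimH\bigl(E(\psi_1)\times\cdots\times E(\psi_m)\bigr)=m-1+\min_{1\le i\le m}\dimH E(\psi_i)
\]
together with the packing dimension identity \eqref{eq:packingd}. This already proves the packing dimension claim.

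\emph{Step 2: the one-dimensional dimensions.} For a non-increasing $\psi$ with $\psi(q)=o(q^{-2})$, Bugeaud and Moreira~\cite{BugeaudMoreira2011} prove that $\dimH E(\psi)=\dimH W(\psi)$. The Jarník--Besicovitch theorem, in its general form for non-increasing $\psi$ with lower order $\lambda$ at infinity, gives $\dimH W(\psi)=2/\lambda$, with $2/\infty=0$. Applying both to each $\psi_i$, which is legitimate by Step 1, gives $\dimH E(\psi_i)=2/\lambda_i$. Substituting this into the formula from Step 1 gives the stated Hausdorff dimension.

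\emph{Step 3: the example.} For $\psi_i(q)=q^{-\lambda_i}$ one has $-\log\psi_i(q)/\log q=\lambda_i$ for every $q\ge 2$. So the lower order is exactly $\lambda_i>2$, and the corollary applies.

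The only point needing care is Step 2. I must make sure the cited Bugeaud--Moreira result holds for $\psi$ that is merely non-increasing with $\psi(q)=o(q^{-2})$, with no extra regularity, and that it covers $\lambda_i=\infty$. If their statement is formulated with an extra regularity condition, I would instead use their result in the form "$\dimH E(\psi)=\dimH W(\psi)$ whenever $\psi(q)=o(q^{-2})$". That form also covers the degenerate case $\lambda_i=\infty$, where both sides vanish.
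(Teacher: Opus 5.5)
Your proposal is correct and follows the paper's route. You check that $\lambda_i>2$ forces $\psi_i(q)<q^{-\mu_i}$ eventually for some $\mu_i>2$, which gives \eqref{condithm1} and \eqref{condithm2}; you then apply Theorem~\ref{thm:main} and substitute the Bugeaud--Moreira value $\dimH E(\psi_i)=2/\lambda_i$ into \eqref{eq:e1}, with the packing claim read off from \eqref{eq:packingd}. The paper states this combination in a single sentence, so your Step~1 just makes explicit the verification it leaves implicit, and it relies on the Bugeaud--Moreira citation in the same way you do.
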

Again this refines the corresponding result
for products of the sets $W(\psi_i)$ from~\cite{WangWuTAMS}.
Corollary~\ref{cor:lowerorder} extends to certain collections of $\psi_i$
with some $\lambda_i=2$, subject to the convergence condition of Theorem~\ref{thm:main} being satisfied. 
We remark that the Hausdorff dimension result in special case $m=2$ and $\lambda_1=\infty, \lambda_2>(5+\sqrt{17})/2$ follows alternatively from a result on sumsets~\cite[Theorem~3.6]{Joh2023RM}; indeed, this
more generally applies in the setting of Theorem~\ref{thm:main} for pairs of approximation functions $\psi_1, \psi_2$ of this decay order, not necessarily monotonic, and provides infinite $1$-dimensional Hausdorff measure. Consequently the  packing dimension result had also been known for $m=1$ and $\lambda_1>(5+\sqrt{17})/2$, see~\cite[Theorem~3.6]{Joh2023RM}.

\subsection{On the conditions}
We briefly discuss the necessity 
of our conditions in the theorem.
Conditions~\eqref{condithm1} and  \eqref{condithm2} 
cannot be
replaced by the weaker Dirichlet upper bound \(\psi_i(q)\le q^{-2}\) or omitted altogether. 
Indeed, combining estimates of Tricot~\cite{Tricot1982} with metrical results on badly approximable numbers 
yields the following
facts.

\begin{theorem}  \label{T2}
    Let $\psi_1,\ldots,\psi_m:\N\to(0,\infty)$ be any functions.
    If \eqref{eq:packingd} holds then we must have
\begin{equation} \label{eq:nicht}
    \liminf_{q\to\infty} q^2 \psi_i(q) = 0, \qquad 1\le i\le m.
\end{equation}
 If \eqref{eq:e1} holds, then \eqref{eq:nicht} may fail for at most one index; moreover if so letting $i=m$ be this exceptional index
 and $m\ge 2$, if
 \begin{equation} \label{eq:kannsein}
     \liminf_{q\to\infty} q^2 \psi_m(q) > \frac{1}{3},
 \end{equation}
 then
 \begin{equation} \label{eq:haelt}
\dim_H E(\psi_1)= \cdots=\dim_H E(\psi_{m-1})=1,\qquad \dim_H E(\psi_m)=0, 
\end{equation}
and
\[
\dimH\bigl(E(\psi_1)\times\cdots\times E(\psi_m)\bigr)
=\dim_P\bigl(E(\psi_1)\times\cdots\times E(\psi_m)\bigr)
=
m-1.
\]
\end{theorem}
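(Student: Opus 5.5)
The key observation is that if $\liminf_{q\to\infty} q^2\psi_i(q)=c>0$, then $E(\psi_i)$ is contained in the set of badly approximable numbers. Indeed, take $x\in E(\psi_i)$ and suppose $x$ were not badly approximable. Then for every $\epsilon>0$ there are infinitely many $p/q$ with $|x-p/q|<\epsilon q^{-2}$. Choosing $\epsilon<c/2$, for all large $q$ we get $\epsilon q^{-2}<\tfrac12\psi_i(q)$, so $x\in W(\tfrac12\psi_i)$, which contradicts $x\in E(\psi_i)$. (The same argument with $c'\psi_i$ for any $c'<1$ works equally well.) Hence $E(\psi_i)\subset \mathrm{Bad}$.

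We then use the standard metric facts about $\mathrm{Bad}$: it has Lebesgue measure zero and Hausdorff dimension one. Its packing dimension is one, and more importantly its upper box dimension is one locally. What we need is a statement forcing the packing dimension of the product to drop below $m$, and a packing dimension bound alone will not give this. The plan is instead to exploit that $E(\psi_i)$ is a null set. For \eqref{eq:packingd} I would argue as follows. Packing dimension $m$ of a subset of $\R^m$ is compatible with measure zero, so measure zero by itself does not suffice. A quantitative refinement is needed: if $\liminf q^2\psi_i(q)=c>0$, then $E(\psi_i)\subset \mathrm{Bad}(c/2)=\{x: |x-p/q|\ge (c/2)q^{-2}\ \forall p/q\}$. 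This set has continued fraction partial quotients bounded by roughly $2/c$, so it is a compact set with Hausdorff and box dimension equal to some $d(c)<1$ (Jarník's estimate). The small exceptional set of finitely many $q$ is handled by a countable union over thresholds $q_0$, each piece being contained in a finite union of bi-Lipschitz images of a bounded-type set. Since $\dim_P$ is countably stable and bounded by the upper box dimension, $\dim_P E(\psi_i)\le d(c)<1$. Tricot's inequality $\dim_P(A\times B)\le\dim_P A+\dim_P B$ then gives $\dim_P\prod E(\psi_j)\le m-1+d(c)<m$, which contradicts \eqref{eq:packingd}. This proves the first assertion.

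For the second assertion, suppose \eqref{eq:e1} holds but two indices, say $i\ne j$, fail \eqref{eq:nicht}. Using $\dim_H(A\times B)\le \dim_H A+\dim_P B$ together with the trivial bounds, we get $\dim_H\prod E(\psi_k)\le \dim_H E(\psi_i)+\dim_P E(\psi_j)+(m-2)$. Without loss of generality $\dim_H E(\psi_i)\ge \dim_H E(\psi_j)$, so the right side of \eqref{eq:e1} equals $m-1+\min_k\dim_H E(\psi_k)\ge m-1+\dim_H E(\psi_j)$. Combining with the upper bound gives $1+\dim_H E(\psi_j)\le \dim_H E(\psi_i)+\dim_P E(\psi_j)\le d_i+d_j$ with $d_i,d_j<1$, where $d_i,d_j$ are the bounds from the previous step. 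This is not yet a contradiction when $\dim_H E(\psi_j)$ is small, so I would choose the ordering more carefully: apply the inequality with $A$ the factor of smaller Hausdorff dimension. Then the left side is $m-1+\min$ and the right side is $\le m-2+\min+d<m-1+\min$, a contradiction.

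For the case \eqref{eq:kannsein}, note that $\liminf q^2\psi_m>1/3$ forces $E(\psi_m)\subset \mathrm{Bad}(c)$ with $c>1/3$, and by Markov–Hurwitz theory this set is countable, consisting of quadratic irrationals equivalent to the golden ratio or $\sqrt2$ together with the discrete Markov spectrum above $1/3$. Hence $\dim_H E(\psi_m)=0$, and likewise $\dim_P E(\psi_m)=0$. Equation \eqref{eq:e1} then reads $\dim_H\prod=m-1$. Since $\dim_H\prod\le\sum_{k<m}\dim_H E(\psi_k)+\dim_P E(\psi_m)$, each factor with $k<m$ must have $\dim_H E(\psi_k)=1$, giving \eqref{eq:haelt}. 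The packing dimension is at most $m-1+0=m-1$ and at least the Hausdorff dimension, so it equals $m-1$. The main obstacle I expect is the countability claim from Markov's theorem, in particular justifying that only countably many $x$ (rather than just a zero-dimensional set) exceed the $1/3$ threshold; a Hausdorff dimension zero statement suffices for the argument.
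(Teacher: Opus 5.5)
Your first two parts follow the paper's route. Bounded eventual partial quotients give $\dim_P E(\psi_i)<1$, and \eqref{eq:diepack} then rules out \eqref{eq:packingd}. For the second claim, you put the factor realising the minimum over all $m$ indices into the Hausdorff slot of \eqref{eq:obenda}, so that at least one exceptional index lands in the packing factor; this is exactly the paper's contradiction. Three smaller corrections there:
\begin{itemize}
\item Your intermediate inequality $m-1+\min_k\dim_H E(\psi_k)\ge m-1+\dim_H E(\psi_j)$ goes the wrong way.
\item Taking ``the smaller of the two exceptional factors'' as $A$ would not suffice, so say explicitly that $A$ is the global minimiser.
\item For a fixed threshold $q_0$ the early partial quotients (e.g.\ $a_1$) are unconstrained, so you get a countable rather than finite union of images of the bounded-type set. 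This is harmless by countable stability of $\dim_P$.
\end{itemize}

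The genuine error is in the last part. You assert
\[
\dim_H\bigl(E(\psi_1)\times\cdots\times E(\psi_m)\bigr)\le \sum_{k<m}\dim_H E(\psi_k)+\dim_P E(\psi_m),
\]
but Tricot's inequality admits only one Hausdorff term, and for $m\ge 3$ this bound is false in general. Take the third factor to be a single point; the inequality would then say $\dim_H(A_1\times A_2)\le \dim_H A_1+\dim_H A_2$. That fails for $A_1=A_2$ the Liouville numbers, which have dimension $0$ while their square has dimension $1$ by Erd\H{o}s' theorem recalled in the introduction.

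The repair is what the paper does. For each fixed $k<m$, apply \eqref{eq:obenda} with $A=E(\psi_k)$ and $B=\prod_{j\ne k}E(\psi_j)$. By \eqref{eq:diepack} and $\dim_P E(\psi_m)=0$ one has $\dim_P B\le m-2$, so $m-1\le \dim_H E(\psi_k)+m-2$, giving $\dim_H E(\psi_k)=1$.

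This repaired step, like your bound $\dim_P\prod\le m-1$, uses $\dim_P E(\psi_m)=0$. So your closing remark that $\dim_H E(\psi_m)=0$ would suffice is wrong: part one only gives $\dim_P E(\psi_m)<1$. What is genuinely needed is the countability of $E(\psi_m)$, obtained via Markov's theorem in your version and via \cite{Bugeaud2008} in the paper.
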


\begin{proof} In the 1-dimension setting,  it is well-known that 
\[
 \dim_P E(\psi) < 1
\]
when \eqref{eq:nicht} does not hold
for $\psi=\psi_1$. Indeed, then $E(\psi)$ consists
of numbers with uniformly bounded partial quotients 
(equivalently $c$-badly approximable for some
fixed $c>0$);
the conclusion follows from combination of~\cite{MauldinUrbanski4} asserting equality
for Hausdorff and packing dimension of the sets $F_m$
with all partial quotients $\le m$ combined with \cite{BroderickKleinbock}
where
the Hausdorff dimension of these sets $F_m$
being strictly less than $1$ was established.

Now the necessity of \eqref{eq:nicht} for \eqref{eq:packingd} follows
directly from applying Tricot's formula
\begin{equation} \label{eq:diepack}
\dim_P(\prod A_j) \le \sum \dim_P(A_j),
\end{equation}
valid for any collection of measurable sets $A_j$,
to $A_j=E(\psi_j)$, $1\le j\le m$. 

Next, for the claim on \eqref{eq:e1} we may assume $m\ge 2$. Now
on the other hand the estimate
\begin{equation} \label{eq:obenda}
\dim_H(A\times B)\le \dim_H(A) + \dim_P(B)
\end{equation}
for measurable $A,B$ again due to Tricot implies
\[
\dimH\bigl(E(\psi_1)\times\cdots\times E(\psi_m)\bigr)
\le \dim_P\bigl(E(\psi_1)\times\cdots\times E(\psi_{m-1})\bigr) + \dim_H(E(\psi_m)).
\]
Hence, if without loss of generality 
$m$ is the index inducing the minimum 
in \eqref{eq:e1}, then this requires 
the packing dimension expression in the right hand side
to be $m-1$, which we have seen is impossible 
if \eqref{eq:nicht} fails for some $i=1,2,\ldots,m-1$.
Hence \eqref{eq:nicht} may only possibly fail for $i=m$.

For the last claim,  note that 
if \eqref{eq:kannsein} holds then $E(\psi_m)$ is at most countable, see for example
\cite{Bugeaud2008}. By stability of Hausdorff dimension under countable unions, we get
\[
\dimH\!\left(
E(\psi_1)\times\cdots\times E(\psi_m)
\right)
\le \dimH\!\left(
E(\psi_1)\times\cdots\times E(\psi_{m-1})
\right)
\le m-1.
\]
Since the right-hand side of~\eqref{eq:e1} is at least \(m-1\), formula \eqref{eq:e1} can only hold if we have equality in the right inequality above.
But by \eqref{eq:obenda}, this implies the left identities in \eqref{eq:haelt}, the right identity holds as the set is countable. Then 
further the product must have Hausdorff dimension 
precisely $m-1$. Since $E(\psi_m)$ is countable
it readily follows from \eqref{eq:diepack} that the packing dimension must be $m-1$ as well.
\end{proof}

\begin{remark}
    The product sets we study may even be empty if \eqref{eq:nicht} fails
    for some index.
    For example $E(0.4\cdot q^{-2})=\emptyset$, thus the implications \eqref{eq:e1} and \eqref{eq:packingd} of the theorem fail trivially.
    Clearly in Theorem~\ref{thm:main} this cannot happen
    due to condition \eqref{condithm1} which implies \eqref{eq:nicht}.
\end{remark}

   
    The claims in Theorem~\ref{T2} are specific to exact approximation sets and do not apply to the setting of limsup sets considered in \cite{WangWuTAMS}.
   It is worth pointing out that there is still
    a gap between the sufficient conditions \eqref{condithm1} and  \eqref{condithm2} on the one hand
    and the (essentially) necessary condition \eqref{eq:nicht} on the other hand.
    Indeed, it would be nice to identify the weakest assumptions under which the claims \eqref{eq:e1} and \eqref{eq:packingd} hold.  
   

\subsection{Outline of the proof}
We briefly outline the main idea behind our construction. The principal
novelty is a new method for counting rational approximants that contribute
to exact approximation in one dimension. This provides a selection
criterion for the rationals relevant to exact approximation and refines
the approach developed in~\cite[Section~2]{BandiGhoshNandi}. We then
extend this counting method to Cartesian products, leading to a
considerably simpler framework than that introduced by Wang and
Wu~\cite{WangWuTAMS}.

Unlike in the limsup setting considered
in~\cite{WangWuTAMS}, the coordinates can no longer be treated
independently. In the exact approximation (liminf) setting, achieving
exact approximation in one coordinate must be accompanied by the
simultaneous exclusion of excessively good rational approximations in the
remaining coordinates. It is precisely this interplay between
approximation and avoidance that underlies our construction.

We summarise below some notations that will be used throughout this paper.
\begin{itemize}
    \item For a Lebesgue measurable set \(E\subset\mathbb R\), we denote its Lebesgue measure by
    \(\mathcal L(E)\). For brevity, we occasionally write \(|E|\) instead of \(\mathcal L(E)\).

    \item For a Lebesgue measurable set \(E\subset\mathbb R^m\), \(m\ge2\), we denote its \(m\)-dimensional Lebesgue measure by \(\mathcal L^m(E)\).

    \item For a cube \(B\subset\mathbb R^m\), we denote by \(\gamma(B)\) its half side length.

    \item For an interval \(U\subset\mathbb R\) and \(r>0\), we define the dilation of \(U\) by
    \[
    rU
    =
    \left\{
    x\in\mathbb R:
    |x-u_0|
    \le
    \frac{r|U|}{2}
    \right\},
    \]
    where \(u_0\) denotes the midpoint of \(U\).

    \item For integers \(p,q\), \(\gcd(p,q)\) denotes their greatest common divisor.

    \item A collection of balls is said to be \emph{\(c\)-separated} if the distance between the centres of any two distinct balls is at least \(c\).
\end{itemize}

\section{Auxiliary lemmas}\label{pre}
First, we introduce two auxiliary lemmas that form the basis of our
approach to exact approximation in one dimension. The first identifies a
large collection of rationals that are relevant for
\(\psi\)-approximation. However, some of these rationals need not be
sufficiently separated at the finer scale \(c\psi_i\), and therefore
cannot be used directly in the exact approximation setting. To overcome
this difficulty, we apply Lemma~\ref{count} to extract a subcollection
enjoying a strong separation~property.

\subsection{Distribution of rationals}\label{sec21}
The distribution of rational numbers with bounded denominators plays a
fundamental role in metric Diophantine approximation and underpins the
ubiquity framework developed in
\cite{BDV,DRV2, WangWu19}.
In particular, rational points exhibit both strong separation properties
and local ubiquity, which are the two features required in our
construction. We recall below the form of these properties needed for the
present paper.

Fix a large integer \(N\ge 8\), and let
\(Q\ge N\) be sufficiently large (the precise lower bound on \(Q\),
depending on the ambient interval, will be specified later). We consider
the collection
\[
\mathcal Q_Q
=
\left\{
\frac pq:
QN^{-1}\le q<Q,\;
\gcd(p,q)=1
\right\}.
\]
It follows from the coprimality of the rationals that the gap between any two distinct points in $\mathcal{Q}_{Q}$ admit a uniform positive lower bound.
\begin{lem}\label{WDP}For any interval $U$, let $Q$ be sufficiently large such that
        \begin{equation}\label{f1}
        Q^2\cdot|U|\ge 16,\qquad
       {8Q^{-1}\log (QN^{-1})}\le \frac{1}{16}|U|.
        \end{equation} Then, there exists a subset $\mathcal{A}_{Q}(U)\subset \mathcal{Q}_{Q}$ such that
\begin{itemize}
  \item For any two distinct rationals $p/q$ and $p^{\prime}/q^{\prime}$ in $\mathcal{Q}_Q$, one has
   $$\left|\frac{p}{q}-\frac{p^{\prime}}{q^{\prime}}\right|\ge \frac{1}{Q^2}.$$
     \item The union of the following disjoint balls is  contained in $U$
        $$\bigcup_{\frac{p}{q}\in \mathcal{A}_{Q}(U)}B\left(\frac{p}{q},\frac{1}{2Q^2}\right)\subset U.$$
    \item The cardinality of \(\mathcal A_Q(U)\) satisfies
        \begin{align}\label{number}
        \# \mathcal{A}_{Q}(U)\asymp_{N} \mathcal{L}(U)\cdot Q^{2}.
        \end{align}
\end{itemize}
\end{lem}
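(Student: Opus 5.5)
The separation claim is elementary. For distinct reduced fractions $p/q\neq p'/q'$ in $\mathcal Q_Q$ we have $|pq'-p'q|\ge 1$, so
\[
\left|\frac pq-\frac{p'}{q'}\right|\ge \frac1{qq'}>\frac1{Q^2},
\]
since $q,q'<Q$. Consequently the balls $B(p/q,1/(2Q^2))$ centred at points of $\mathcal Q_Q$ are pairwise disjoint. The plan is then to define
\[
\mathcal A_Q(U)=\Bigl\{p/q\in\mathcal Q_Q:\ B\bigl(p/q,\tfrac1{2Q^2}\bigr)\subset U\Bigr\}.
\]
With this choice the second bullet holds by construction, and only the counting estimate \eqref{number} remains.

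\emph{Upper bound.} The disjoint balls all lie in $U$ and each has length $Q^{-2}$, so comparing measures gives $\#\mathcal A_Q(U)\le Q^2\mathcal L(U)$.

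\emph{Lower bound.} This is the main step, and it should use the second condition in \eqref{f1}. Write $\tfrac12U$ for the concentric half-interval. Its distance to the complement of $U$ is $|U|/4$, which exceeds $1/(2Q^2)$ by the first condition in \eqref{f1}. Hence every point of $\mathcal Q_Q\cap \tfrac12 U$ lies in $\mathcal A_Q(U)$, and it suffices to show
\[
\#\bigl(\mathcal Q_Q\cap\tfrac12U\bigr)\gg_N |U|Q^2.
\]

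I would prove this by counting directly for each fixed $q$. The number of $p$ with $p/q\in\tfrac12U$ is at least $q|U|/2-1$. Among these, the $p$ coprime to $q$ number
\[
\tfrac12\,|U|\,\varphi(q)+O(2^{\omega(q)})
\]
by M\"obius inversion. Summing over $QN^{-1}\le q<Q$ and using the classical estimate $\sum_{q<x}\varphi(q)=\tfrac{3}{\pi^2}x^2+O(x\log x)$, the main term is
\[
\tfrac12|U|\cdot\tfrac3{\pi^2}Q^2(1-N^{-2})\gg |U|Q^2 .
\]
The error terms total $O(Q\log Q)$, and $\sum_{q<Q}2^{\omega(q)}=O(Q\log Q)$ as well.

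The hardest part is absorbing these error terms, which is exactly what \eqref{f1} is designed for. The second condition gives $Q\log(Q/N)\le Q^2|U|/128$, so the $O(Q\log Q)$ errors are at most a small constant multiple of $|U|Q^2$ (with $\log Q$ versus $\log(Q/N)$ differing by a factor depending only on $N$). Alternatively, the errors can be controlled via a Dirichlet-type covering argument: intervals $B(p/q,1/(qQ))$ with $q<Q$ cover $\tfrac12U$, and the contribution from $q<Q/N$ is small, in the style of local ubiquity.

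If tracking the constants becomes messy, I would fall back on the standard ubiquity statement for rationals (as in \cite{BDV}), namely that $\mathcal L\bigl(\tfrac12U\cap\bigcup_{Q/N\le q<Q}B(p/q,N/Q^2)\bigr)\ge \tfrac14|U|$. Dividing by the ball length $2N/Q^2$ then gives $\gg_N |U|Q^2$ centres. Combining the upper and lower bounds yields $\#\mathcal A_Q(U)\asymp_N|U|Q^2$.
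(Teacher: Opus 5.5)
Your proof is correct, but your main lower-bound argument is arithmetic, whereas the paper's is geometric. Your $\mathcal A_Q(U)$ is a harmless enlargement of the paper's choice $\mathcal Q_Q\cap\tfrac12U$.

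\emph{The paper's route.} It never counts coprime residues. By Dirichlet's theorem every $x\in\tfrac14U$ lies within $1/(qQ)$ of a reduced $p/q$ with $q<Q$. The set $\widetilde U$ of points reached by denominators $q<Q/N$ has measure at most $\frac{|U|}{2N}+\frac{8\log(Q/N)}{Q}\le\frac18|U|$, by \eqref{f1} and $N\ge 8$. So at least $\frac18|U|$ of $\tfrac14U$ is covered by balls $B(p/q,N/Q^2)$ with $p/q\in\mathcal A_Q(U)$, which gives $\#\mathcal A_Q(U)\ge\frac{1}{16N}|U|Q^2$. This is exactly your ``alternative'' and fallback.

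\emph{Your route.} The M\"obius count yields $\#(\mathcal Q_Q\cap\tfrac12U)\approx\frac{3}{2\pi^2}(1-N^{-2})|U|Q^2$, with a constant independent of $N$. However, it needs a genuine comparison of constants, which you asserted rather than checked. A ratio between $\log Q$ and $\log(Q/N)$ that merely ``depends only on $N$'' is not enough. If that ratio is of size $1+\log N$, the error bound $C(1+\log N)|U|Q^2/128$ need not lie below the main term for large $N$.

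\emph{The fix.} Since $Q$ is large, assume $Q\ge N^2$, so that $\log Q\le 2\log(Q/N)$. Bound $\sum_{q<Q}2^{\omega(q)}\le\sum_{q<Q}\tau(q)\le Q(1+\log Q)$. Then \eqref{f1} bounds the M\"obius error by roughly $|U|Q^2/50$. The error in $\sum\varphi(q)$ contributes only $O(|U|Q\log Q)$. Both are well below the main term $\approx 0.15\,|U|Q^2$.

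\emph{Comparison.} Your route works and gives a sharper constant, but it is specific to the rationals and needs number-theoretic asymptotics. The paper's route uses only Dirichlet's theorem and the trivial count of fractions with fixed denominator in an interval. That is all the $\asymp_N$ claim requires, and it is the form that transfers to the ubiquity and well-distributed-system framework the paper is refining.
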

Whenever there is no ambiguity, we simply write $\mathcal{A}_{Q}$ instead of $\mathcal{A}_{Q}(U)$ for the ease of notation. For related distribution results for rational points in a more abstract
setting, we refer the reader to
\cite[Definition~1.3]{BandiGhoshNandi}.

    \begin{proof}
        The first item is clear from the definition
        of $\mathcal{Q}_{Q}$.
        We note that the first inequality in (\ref{f1}) ensures that the set $\mathcal{Q}_{Q}\cap \frac12U$ is non-empty.
        Let $\mathcal{A}_{Q}(U)$ be the collection of reduced rationals in $\mathcal{Q}_Q \cap \frac{1}{2}U$. We claim that it has the desired properties. It is clear that $$
        \bigcup_{\frac{p}{q}\in \mathcal{A}_{Q}(U)}B\left(\frac{p}{q},\frac{1}{2Q^2}\right)\subset U.
        $$
Now we count the number of the elements in $\mathcal{A}_{Q}(U)$. By a volume argument, we immediately obtain the upper bound
$$\#\mathcal{A}_{Q}(U)\cdot Q^{-2}\le |U|\Longrightarrow \#\mathcal{A}_{Q}(U)\le |U|\cdot Q^{2}.$$
Next, by letting
$$\widetilde{U}:=\bigcup_{1\le q< QN^{-1}}\left\{x\in \frac12U:\left|x-\frac{p}{q}\right|<\frac{1}{qQ}, \ {\text{for some integer}}\ p\right\},$$
we claim that
\begin{align}\label{include}
   (1/4\cdot U)\setminus \widetilde{U}\subset \bigcup_{\frac{p}{q}\in \mathcal{A}_{Q}(U)}B\left(\frac{p}{q},\frac{1}{Q^{2}N^{-1}}\right).
   \end{align}
In fact, recalling Dirichlet's theorem that for any $x\in \frac14U$, there exists a reduced rational point $p/q$ such that
        $$\left|x-\frac{p}{q}\right|<\frac{1}{qQ}\textrm{ and }1\le q\le Q.$$
        When $x\in \widetilde{U}^c$ this forces the denominator to lie in $QN^{-1} \le q<Q$. Together with the definition of $\mathcal{A}_{Q}(U)$, the claim is clear.

        Therefore, the lower bound of $\#\mathcal{A}_{Q}(U)$ is obtained by estimating the Lebesgue measure of
        $$(1/4\cdot U)\setminus \widetilde{U}.$$
        We note that the interval $\frac14U$ contains at most
        $$\max\big\{|1/4\cdot U|\cdot q,2\big\}\le q\cdot |1/4\cdot U|+2$$
        rationals with denominator $q$, by the $1/q$-separability of these rationals.
        Hence, the Lebesgue measure of the set $\widetilde{U}$ is less than
        $$\sum_{q=1}^{QN^{-1}-1}\frac{2}{qQ}\bigl(q\cdot|1/4\cdot U|+2\bigr)
\le \frac{1}{2N}|U|
   +\frac{8\log (QN^{-1})}{Q}
\le \frac{1}{8}|U|,$$
where the last inequality follows from the inequality (\ref{f1}) and $N\ge 8$. It implies that
        $$|(1/4\cdot U)\setminus \widetilde{U}|\ge \frac18 |U|.$$

        Therefore, by (\ref{include}), we conclude that
        $$\frac{1}{8}|U|\le\#\mathcal{A}_{Q}(U)\cdot 2Q^{-2}N\Longrightarrow \#\mathcal{A}_{Q}(U)\ge \frac{1}{16}|U|\cdot Q^{2}N^{-1}.$$
        We have finished the proof of the second and third items.

    \end{proof}
{
\subsection{Selection criterion}\label{select}
Let $\psi:\mathbb{N}\rightarrow \mathbb{R}^+$ be a positive function satisfying the assumptions imposed on $\psi_1$ in Theorem \ref{thm:main}. 
For each rational $p/q$, for any $0<c<1$, we define the annulus (union of two intervals)
$$A_c(p/q)=B\left(\frac{p}{q},\psi(q)\right)\setminus B\left(\frac{p}{q},c\psi(q)\right).$$ 

Fix a rational point $p_0/q_0$ with $\textrm{gcd}(p_0,q_0)=1$. For any $0<c<1$, choose an arbitrary interval in the annulus $A_{c}(p_0/q_0)$ and denote it by $U_0$. Let $N\ge 8$ and $Q_-$ be large integers. For any sufficiently large integer $Q_+>Q_-$ such that $Q_+-Q_->N$ and (\ref{f1}) holds with $U=U_0$, we apply Lemma \ref{WDP} by taking parameter $Q=Q_+$, which shows the existence of a subset $\mathcal{A}_{Q_+}\subset \mathcal{Q}_{Q_+}$ satisfying the following properties:
\begin{itemize}
  \item The distance between any two distinct rationals in $\mathcal{A}_{Q_+}$ is at least $Q_+^{-2}$.

     \item The union of the disjoint balls centered in $\mathcal{A}_{Q_+}$ with radius $\frac12Q_+^2$ are contained in $U_0$.
    \item The number of these balls is asymptotically $\mathcal{L}(U_0)\cdot Q_+^{2}$.
\end{itemize}


Our criterion for selecting rationals in $\mathcal{A}_{Q_+}$ is as follows: we retain those rationals whose associated annulus of width $(1-c)\psi(q)$ is disjoint from every ball of radius $c\psi(q)$ centred at a rational point with denominator $q$ in the range $Q_-\le q< Q_+$. By this choice, it follows that the retained rationals automatically avoid rationals whose denominators lie in the interval $[Q_-,Q_+)$.

Now, we estimate the number of rationals eliminated from $\mathcal{A}_{Q_+}$, that is, calculate the cardinality of set
\begin{align*}
\mathcal{D}_{Q_+}(c) := \Bigg\{ \frac{p'}{q'} \in \mathcal{A}_{Q_+} :\; & A_{c}(p'/q') \cap B\left(\frac{p}{q}, c\psi(q)\right) \neq \emptyset, \\
& \quad \text{for some rational point } \frac{p}{q} \text{ with } Q_-\le q < Q_+ \Bigg\}.
\end{align*}
The strategy is to split $[Q_-,Q_+)$ into finitely many pieces of the form
$$[N^{-k_-}Q_+,N^{-k_-+1}Q_+),\ \cdots [N^{-2}Q_+,N^{-1}Q_+),\ [N^{-1}Q_+,Q_+),$$
where $k_-$ is the smallest integer satisfy that $N^{-k_-}Q_+\le Q_-$, and then sum the counts obtained from each piece. In addition, we can further require that $Q_+$ satisfies
 \begin{align}\label{lcondi}
    \sum_{1\le t\le k_-}N^{-2t}Q_+^2\psi(N^{-t}Q_+^2)\le \frac{1}{1152}(1-c)N^{-7}.
    \end{align}
     Indeed, together with the monotonicity of $\psi$ and by (\ref{condithm2}) which is the convergence of the series 
     $\sum_{q\ge 1}q\psi(q),$ 
     the Cauchy condensation test allows us to choose $Q_-$ and $Q_+$ such that (\ref{lcondi}) holds. Since $c$ is fixed once $Q_+$ is chosen, we simply write $\mathcal{D}_{Q_+}$ instead of $\mathcal{D}_{Q_+}(c)$ for the ease of notation.  We prove the following counting lemma.
\begin{lem}[Counting Lemma]\label{count}
    Under the notations as above, we have $\#\mathcal{D}_{Q_+}\le \#\mathcal{A}_{Q_+} / 2$.
\end{lem}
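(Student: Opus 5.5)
We need to bound the number of $p'/q'\in\mathcal A_{Q_+}$ whose annulus $A_c(p'/q')$ meets some ball $B(p/q,c\psi(q))$ with $Q_-\le q<Q_+$. The plan is to split this range of $q$ into the dyadic-type blocks $I_t=[N^{-t}Q_+,N^{-t+1}Q_+)$, $1\le t\le k_-$, bound the number of eliminated $p'/q'$ per block, and sum.

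Fix $t$ and a rational $p/q$ with $q\in I_t$, $\gcd(p,q)=1$. If $A_c(p'/q')\cap B(p/q,c\psi(q))\ne\emptyset$, then
\[
\left|\frac{p'}{q'}-\frac pq\right|\le \psi(q')+c\psi(q)\le 2\psi(q)\le 2\psi(N^{-t}Q_+),
\]
using monotonicity of $\psi$ and $q\le q'$ (note $q'\ge Q_+N^{-1}\ge q$ for $t\ge 2$; the top block $t=1$ needs the same bound, and there I would use $q'\ge QN^{-1}$ and monotonicity in the form $\psi(q')\le\psi(N^{-1}Q_+)$). Since distinct points of $\mathcal A_{Q_+}$ are $Q_+^{-2}$-separated, each such $p/q$ eliminates at most $4Q_+^2\psi(N^{-t}Q_+)+1$ points. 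Moreover, the ``$+1$'' term only matters if some $p'/q'$ actually lies within $2\psi(q)$ of $p/q$. Since $p/q\neq p'/q'$, however, we have $|p'/q'-p/q|\ge 1/(qq')\ge 1/(N^{-t+1}Q_+^2)$, which exceeds $2\psi(N^{-t}Q_+)$ once $Q_+$ is large, by condition \eqref{condithm1}. This means the rationals $p/q$ with $q<Q_+$ and $p/q\notin\mathcal A_{Q_+}$ eliminate nothing; the only danger would be $p/q=p'/q'$, which is impossible because denominators differ. So I would refine the argument. Rather than counting per $p/q$, I will count per $p'/q'$: the number of $p/q$ with $q\in I_t$ lying within distance $2\psi(N^{-t}Q_+)$ of a given point.

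The cleaner route is a measure count. The set of points of $U_0$ within distance $2\psi(N^{-t}Q_+)+\tfrac12Q_+^{-2}$ of some $p/q$ with $q\in I_t$ has Lebesgue measure at most
\[
\sum_{q\in I_t}(q|U_0|+2)\cdot 2\bigl(2\psi(N^{-t}Q_+)+Q_+^{-2}\bigr).
\]
Each eliminated $p'/q'$ has its disjoint ball $B(p'/q',\tfrac12Q_+^{-2})$ inside this set. Hence
\[
\#(\text{eliminated in block } t)\cdot Q_+^{-2}\ll |U_0|\,N^{-2t+2}Q_+^2\psi(N^{-t}Q_+)+\dots
\]
The obstacle is the $Q_+^{-2}$ padding term, which gives a contribution of order $|U_0|N^{-2t}$ times $\#\mathcal A_{Q_+}$, i.e.\ not small. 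To remove it, I use the arithmetic gap above: for $q\in I_t$ and $q'\in[Q_+N^{-1},Q_+)$, the gap $|p'/q'-p/q|\ge 1/(qq')$ forces any eliminating pair to satisfy $1/(qq')\le 2\psi(q)$. That is, $q'\ge 1/(2q\psi(q))$, which for large $Q_+$ (by $\psi=o(q^{-2})$) is larger than $Q_+$ unless $\psi(q)q Q_+\gtrsim 1$. The key step is therefore to show that, for each $p'/q'$, the candidate $p/q$ are controlled by the relation $|qp'-pq'|\le 2qq'\psi(q)$. Counting pairs $(p,q)$ with $q\in I_t$ and $|qp'-pq'|\le 2N^{-t+1}Q_+^2\psi(N^{-t}Q_+)=:h_t$, one sums over $p'/q'\in\mathcal A_{Q_+}$ and over integers $1\le |r|\le h_t$ with $qp'-pq'=r$. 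For fixed $q'$ and $r$ there are at most $O(q'|U_0|+1)$ solutions in $(p',q,p)$ ranging over $U_0$ and $q\in I_t$. Thus the total is $\ll N^{?}\,h_t\,Q_+^{2}|U_0|$, up to factors $N^{O(1)}$ coming from the length of $I_t$.

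Summing over $t$ and comparing with $\#\mathcal A_{Q_+}\ge\tfrac1{16}|U_0|Q_+^2N^{-1}$ from Lemma~\ref{WDP}, the eliminated fraction is bounded by $N^{O(1)}\sum_t N^{-2t}Q_+^2\psi(N^{-t}Q_+^2)$-type quantities. I expect these to be exactly controlled by \eqref{lcondi}, whose constant $\frac1{1152}(1-c)N^{-7}$ absorbs all the $N$-powers and numerical constants, giving at most $\tfrac12\#\mathcal A_{Q_+}$. The main obstacle is this lattice-point count, that is, bounding solutions of $qp'-pq'=r$ uniformly, and tracking the $N$-exponents so that they match $N^{-7}$. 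I would also check whether the $(1-c)$ factor arises from using the annulus width $(1-c)\psi(q')$ in place of the crude bound $2\psi$.
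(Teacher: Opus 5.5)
\textbf{This proposal has a genuine gap.} It never reaches a valid bound, and both concrete estimates it relies on fail.

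First, your intermediate claim that, for large $Q_+$, no rational with $q\in I_t$ eliminates anything is false. Put $q_t=N^{-t}Q_+$. The inequality $1/(N^{-t+1}Q_+^2)>2\psi(q_t)$ is equivalent to $q_t\psi(q_t)<1/(2NQ_+)$. This fails for the blocks near $Q_-$ once $Q_+$ is large: a rational with small denominator has a ball $B(p/q,c\psi(q))$ far larger than $Q_+^{-2}$, and can swallow many points of $\mathcal A_{Q_+}$. Also, for $t=1$ one can have $p/q=p'/q'$. That case is harmless only because $q$ is then a multiple of $q'$, so $B(p/q,c\psi(q))\subseteq B(p'/q',c\psi(q'))$ misses $A_c(p'/q')$.

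Second, and more seriously, your final lattice-point count is too lossy. Bounding the solutions for fixed $(q',r)$ by $O(q'|U_0|+1)$ gives a block-$t$ total $\asymp h_tQ_+^2|U_0|$. The eliminated proportion is then of order $N^{O(1)}\sum_t N^{-t}Q_+^2\psi(q_t)=N^{O(1)}\sum_t N^{t}q_t^2\psi(q_t)$, not a sum with $N^{-2t}$. The factor $N^t$, with $t$ up to $k_-\approx\log_N(Q_+/Q_-)$, cannot be hidden in $N^{O(1)}$. For $\psi(q)=q^{-3}$ the last block alone contributes about $Q_+/Q_-^2$, which is unbounded as $Q_+\to\infty$. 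What you lose is that $q\equiv r\overline{p'}\pmod{q'}$ lands in $I_t$ only for a proportion $\approx N^{1-t}$ of the $p'$. Recovering that needs either an equidistribution input or reversing the order of counting (fix $p/q$, count $p'/q'$ near it), which is what the paper does.

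The missing idea is to use the gap $1/(qq')$ not to exclude elimination, but to absorb the padding you correctly identified as the obstacle. Suppose $p/q\neq p'/q'$ eliminates $p'/q'$. Then $1/(qq')\le c\psi(q)+\psi(q')$, together with $\psi(q')<\frac{1}{2N^2q'^2}$ and $qq'<Q_+^2\le N^2q'^2$, forces $\frac{1}{2N^2q'^2}\le c\psi(q)$. Hence the pairwise disjoint balls $B(p'/q',\frac{1}{2N^2q'^2})$, $p'/q'\in\mathcal D_{Q_+}$, lie in $\bigcup B(p/q,3c\psi(q))$, with no $Q_+^{-2}$ padding. You derived $1/(qq')\le 2\psi(q)$ but did not use it this way.

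The same trick applied to $p_0/q_0$ controls the eliminating rationals themselves. Any $p/q$ whose ball $B(p/q,c\psi(q))$ meets $U_0$ satisfies $p/q\ne p_0/q_0$. Hence $\psi(q_0)\ge\frac{1}{2q^2}$ and $|p/q-p_0/q_0|\le 2\psi(q_0)$. By the spacing of rationals with denominators below $N^{-t+1}Q_+$, this gives $\#\mathcal N_{Q_+,t}\lesssim N^{-2t+4}Q_+^2\psi(q_0)$ \emph{with no additive term}. Your per-denominator count $\sum_{q\in I_t}(q|U_0|+2)$ instead carries an additive term of order $N^{-t}Q_+$. That term is negligible only if $Q_-|U_0|$ is large, whereas the gap argument needs no such assumption.

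Combining the two steps gives $\#\mathcal D_{Q_+}\lesssim N^6Q_+^2\psi(q_0)\sum_t N^{-2t}Q_+^2\psi(N^{-t}Q_+)$, which \eqref{lcondi} controls. The factor $(1-c)^{-1}$ comes from $\psi(q_0)=|U_0|/(1-c)$ when this is compared with $\#\mathcal A_{Q_+}\ge\frac1{16}|U_0|Q_+^2N^{-1}$. It does not come from the annulus width around $p'/q'$.
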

\begin{proof}\label{trequire}
    It readily follows, by the assumption {(\ref{condithm1})} on $\psi$, that
    \begin{align}\label{psicondi}
    \psi(q)<\frac{1}{2N^2}\cdot\frac{1}{q^2}\text{ for large }q.
    \end{align}
     We partition $\mathcal{D}_{Q_+}$ into finitely many subsets as follows. For any $1\le t\le k_-$, let
     \begin{align*}
\mathcal{D}_{Q_+,t}:= \Bigg\{ \frac{p'}{q'} \in \mathcal{A}_{Q_+} : & A_{c}(p'/q') \cap B\left(\frac{p}{q}, c\psi(q)\right) \neq \emptyset, \\
& \quad \text{for some rational point } \frac{p}{q} \text{ with } N^{-t}Q_+\le q< N^{-t+1}Q_+ \Bigg\}.
\end{align*}
Now, we use a volume argument to estimate its cardinality. For any rational $p^{\prime}/q^{\prime}\in\mathcal{D}_{Q_+,t}$, there exists a rational point $p/q$ such that $$ N^{-t}Q_+\le q<N^{-t+1}Q_+ \textrm{ and } \left|\frac{p}{q}-\frac{p^{\prime}}{q^{\prime}}\right|\le c\psi(q)+\psi(q^{\prime}).$$
We claim that ${p}/{q}\neq{p^{\prime}}/{q^{\prime}}.$ Otherwise, since $\textrm{gcd}(p^{\prime},q^{\prime})=1$, one has $q=rq^{\prime}$ for some integer $r\ge 1$. It follows that $q\ge q^{\prime}$. By the non-increasing of $\psi$, one has $c\psi(q)\le c\psi(q^{\prime})$. Therefore, by the definition of annulus, it follows that
$$A_{c}(p'/q') \cap B\left(\frac{p^{\prime}}{q^{\prime}}, c\psi(q^{\prime})\right) =\emptyset\ \Longrightarrow\  A_{c}(p'/q') \cap B\left(\frac{p}{q}, c\psi(q)\right) =\emptyset,$$
which yields a contradiction. On the other hand, for any two distinct rational points, one has
\begin{align*}
\left|\frac{p}{q}-\frac{p^{\prime}}{q^{\prime}}\right|\ge\frac{1}{qq^{\prime}}.
\end{align*}
By (\ref{psicondi}) applied to $q^{\prime}$, it follows that
$$\frac{1}{qq^{\prime}}\le  \left|\frac{p}{q}-\frac{p^{\prime}}{q^{\prime}}\right|\le  c\psi(q)+\psi(q^{\prime})\le c\psi(q)+\frac{1}{2N^2q^{\prime2}}.$$
Together with the range of $q$ and $q^{\prime}$ and $t\ge 1$, one has $$qq^{\prime}< N^{-t+1}Q_+\cdot Q_+\le Q_{+}^2
= N^2\cdot (N^{-1}Q_+)^2\le N^2q^{\prime2}, $$
where the second inequality holds since $t\ge 1$.
    Combining the last two properties, we conclude that
    $$\frac{1}{2N^2}\cdot\frac{1}{q^{\prime2}}\le
    \frac{1}{qq^{\prime}}-\frac{1}{2N^2q^{\prime2}}
    \le c\psi(q),$$
    and consequently
    \begin{align*}
    \left|\frac{p}{q}-\frac{p^{\prime}}{q^{\prime}}\right|
    \le c\psi(q)+\psi(q^{\prime})
    \le 
    c\psi(q)+\frac{1}{2N^2q^{\prime2}}
    \le 2c\psi(q).
    \end{align*}
    Hence, using again $1/(2N^2q^{\prime2})\le c\psi(q)$, for any $p^{\prime}/q^{\prime}\in\mathcal{D}_{Q_+,t}$, we have found a rational point $p/q$ with $N^{-t} Q_+\le q<N^{-t+1}Q_+$ such that
    $$B\left(\frac{p^{\prime}}{q^{\prime}},\frac{1}{2N^2q^{\prime2}}\right)\subset
    B\left(\frac{p^{\prime}}{q^{\prime}},c\psi(q)\right)
    \subset B\left(\frac{p}{q},3c\psi(q)\right).$$
    Thus, we obtain the inclusion
    \begin{align}\label{result1}
       \bigcup_{p^{\prime}/q^{\prime}\in\mathcal{D}_{Q_+}}B\left(\frac{p^{\prime}}{q^{\prime}},\frac{1}{2N^2q^{\prime2}}\right)\subset \bigcup_{1\le t\le k_-}\bigcup_{\substack{p/q:N^{-t} Q_+\le q<N^{-t+1}Q_+}}B\left(\frac{p}{q},3c\psi(q)\right),
    \end{align}
    where the balls in the left-hand side are pairwise disjoint by the separation of rationals in $\mathcal{D}_{Q_+,t}$. We note that the second union of balls in the right-hand side should be further restricted to
    $$B\left(\frac{p}{q},c\psi(q)\right)\cap U_0\neq\emptyset.$$
    Thus, it leads us to estimate the cardinality of following set
    $$\mathcal{N}_{Q_+,t}=\left\{\frac{p}{q}:B\left(\frac{p}{q},c\psi(q)\right)\cap U_0\neq\emptyset, \ N^{-t}Q_+\le q< N^{-t+1}Q_+\right\}.$$
    By similar argument as above, for any $p/q\in\mathcal{N}_{Q_+,t}$, we have $p/q\neq p_0/q_0.$ Similarly, if $p/q= p_0/q_0$, then $q\ge q_0$ by $\textrm{gcd}(p_0,q_0)=1$. Since $U_0$ is one of the interval of $A_{c}(p_0/q_0)$, it follows that
    $$B\left(\frac{p_0}{q_0},c\psi(q_0)\right)\cap U_0=\emptyset\ \Longrightarrow\ B\left(\frac{p}{q},c\psi(q)\right)\cap U_0=\emptyset.$$
    Therefore, we have that
    $$\frac{1}{q^{2}}\le\frac{1}{q_0q}\le \left|\frac{p_0}{q_0}-\frac{p}{q}\right|\le c\psi(q)+\psi(q_0)<\frac{1}{2q^{2}}+\psi(q_0)\Longrightarrow \frac{1}{2q^{2}}\le \psi(q_0),$$
    where the fourth inequality holds by (\ref{psicondi}) with $N\ge 8$ and $c<1$.
    It implies that
    $$\left|\frac{p_0}{q_0}-\frac{p}{q}\right|\le 
    \frac{1}{2q^{2}}+\psi(q_0)\le 2\psi(q_0).$$
    Then, for any rational point $p/q\in\mathcal{N}_{Q_+,t}$, we have that
    $$B\left(\frac{p}{q},\frac{1}{2N^2q^2}\right)\subset B\left(\frac{p_0}{q_0},3\psi(q_0)\right).$$
    Therefore, we have the following inclusion
    $$\bigcup_{{p}/{q}\in\mathcal{N}_{Q_+,t}}B\left(\frac{p}{q},\frac{1}{2N^2q^{2}}\right)\subset B\left(\frac{p_0}{q_0},3\psi(q_0)\right).$$
    where the balls in the left-hand side are pairwise disjoint by separation of rationals in $\mathcal{N}_{Q_+,t}$. By a volume argument, one has
    $$\#\mathcal{N}_{Q_+,t}\cdot \frac{1}{2N^2\cdot (N^{-t+1}Q_+)^2}\le 3\psi(q_0).$$
    Returning back to (\ref{result1}), by a volume argument again, one has
    \begin{align*}
         \#\mathcal{D}_{Q_+}\cdot\frac{1}{2N^2\cdot Q_+^2}&\le \sum_{1\le t\le k_-}\#\mathcal{N}_{Q_+,t}\cdot 3c\psi(N^{-t}Q_+)\\
         &\le \sum_{1\le t\le k_-}{2N^{-2t+4}Q_+^2}\cdot3\psi(q_0)\cdot3\psi(N^{-t}Q_+).
    \end{align*}
    Thus, we conclude that
    \begin{align*}
    \#\mathcal{D}_{Q_+}&\le{36 \cdot Q_+^2 N^6}\psi(q_0)\cdot\sum_{1\le t\le k_-}N^{-2t}Q_+^2\psi(N^{-t}Q_+)\\
    &\le\#\mathcal{A}_{Q_+}\cdot 576(1-c)^{-1}N^7\sum_{1\le t\le k_-}N^{-2t}Q_+^2\psi(N^{-t}Q_+)\\
    &\le \#\mathcal{A}_{Q_+}/2,
    \end{align*}
    where the second inequality holds by the lower bound estimate of the third item in Lemma \ref{WDP}, the last inequality holds by the choice of $Q_-$ and $Q_+$ in (\ref{lcondi}).
\end{proof}

Therefore, after the selection process,  it retains a considerable number of rational points, which is exactly what we need for exact approximation in one-dimensional case. We associate an annulus to each such point, choose an arbitrary interval in the annulus, repeat the above process in that interval and select related parameters appropriately. This inductive process eventually achieves the desired Cantor subset of $E(\psi)$. As the main purpose of this paper is the Cartesian product, we would not go into detailed discussion of these steps.

Finally, towards the end of this section, we introduce some auxiliary lemmas. The first is the cutting lemma which partitions the rectangle into well-separated cubes.
\begin{lem}[Cutting lemma \cite{WangWuTAMS}]\label{cut}
    Let $$R=B(x_1,r_1)\times \cdots\times B(x_m,r_m)$$
    be a rectangle in $[0,1]^m$. Then inside $R$, there is a collection $\mathcal{C}$ of $5r$-separated balls
    with the same radius $r=\min\{r_i:1\le i\le m\}$ satisfying
    $$\sum_{B\in\mathcal{C}}\mathcal{L}^m(B)\asymp\mathcal{L}^m(R).$$
\end{lem}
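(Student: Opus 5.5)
We must prove the Cutting Lemma (Lemma~\ref{cut}): inside the rectangle $R=B(x_1,r_1)\times\cdots\times B(x_m,r_m)$ there is a family $\mathcal C$ of $5r$-separated balls of common radius $r=\min_i r_i$ whose total volume is comparable to $\mathcal L^m(R)$. The plan is a direct lattice construction, with all balls taken in the sup-norm, i.e.\ cubes of half side length $r$. This matches the paper's use of $\gamma(B)$ for cubes; Euclidean balls would only change the constants.

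\textbf{Step 1 (one coordinate).} For each $i$, place centres in $B(x_i,r_i)=[x_i-r_i,x_i+r_i]$ at the points
\[
y_{i,j}=x_i-r_i+r+5rj,\qquad j=0,1,\dots,n_i-1,
\]
where $n_i:=\lfloor (2r_i-2r)/(5r)\rfloor+1$. Then $y_{i,0}=x_i-r_i+r$ and $y_{i,n_i-1}\le x_i+r_i-r$, so each interval $B(y_{i,j},r)$ lies in $B(x_i,r_i)$. Consecutive centres are exactly $5r$ apart. Since $r_i\ge r$, we have $n_i\ge1$. Moreover $n_i\ge \max\{1,(2r_i-2r)/(5r)\}\ge r_i/(5r)$: if $r_i\le 2r$ this holds because $n_i\ge1$, and otherwise $(2r_i-2r)/(5r)\ge r_i/(5r)$. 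Hence $n_i\asymp r_i/r$ with absolute constants, since also $n_i\le 2r_i/(5r)+1\le r_i/r$.

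\textbf{Step 2 (product family).} Let $\mathcal C$ consist of the cubes $\prod_i B(y_{i,j_i},r)$ over all multi-indices $(j_1,\dots,j_m)$. Each such cube lies in $R$. Two distinct centres differ in some coordinate by at least $5r$, so their sup-norm distance, and hence their Euclidean distance, is at least $5r$; thus $\mathcal C$ is $5r$-separated. The total volume is
\[
\sum_{B\in\mathcal C}\mathcal L^m(B)=\prod_i n_i\,(2r)^m\asymp \prod_i \frac{r_i}{r}\cdot r^m\asymp \prod_i 2r_i=\mathcal L^m(R),
\]
with constants depending only on $m$. The upper bound $\le\mathcal L^m(R)$ is also immediate from disjointness and containment.

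There is no real obstacle here. The only point needing care is the lower bound $n_i\gtrsim r_i/r$ in the degenerate case $r_i\approx r$, which the floor-plus-one definition of $n_i$ handles. If Euclidean balls are wanted instead, the same centres work: take the inscribed balls of radius $r$, which lose only a factor depending on $m$.
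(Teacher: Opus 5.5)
Your proof is correct. The paper gives no proof of this lemma; it simply imports it from Wang--Wu. Your explicit grid construction therefore supplies a self-contained argument, and there is no internal proof to compare it with.

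Working with sup-norm balls is the right reading, since the paper itself writes $B(x,r)=\prod_i B(x_i,r)$ for balls in $\mathbb{R}^m$. Your argument yields the explicit lower bound $\sum_{B\in\mathcal C}\mathcal L^m(B)\ge 5^{-m}\mathcal L^m(R)$. The upper bound $\le \mathcal L^m(R)$ comes from disjointness and containment, and these two bounds are all that is used later.

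One small slip. The chain $n_i\le 2r_i/(5r)+1\le r_i/r$ is false in its second step when $r\le r_i<5r/3$; for example $r_i=r$ gives $7/5>1$. Keep the $-2r$ instead:
\[
n_i\le \frac{2r_i-2r}{5r}+1=\frac{2r_i+3r}{5r}\le \frac{r_i}{r}, \qquad\text{since } r_i\ge r.
\]
This changes nothing, because, as you note, the upper volume bound follows directly from disjointness and containment anyway.
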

The following lemma is a basic tool in estimating the Hausdorff dimension. 
\begin{lem}[Mass distribution principle \cite{Falconer_book2013}]\label{MDP}
Let $E\in[0,1)$ be a Borel set and $\mu$ be a measure with $\mu(E)>0$. If for any $x\in E$,
$$\liminf_{r\rightarrow 0}\frac{\log \mu(B(x,r))}{\log r}\ge s,$$
then $\dimH E\ge s$.
\end{lem}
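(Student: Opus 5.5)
The plan is to run the standard Frostman-type argument: localise to a subset of $E$ on which the estimate $\mu(B(x,r))\le r^{s-\varepsilon}$ holds uniformly for small $r$, and then bound $s-\varepsilon$-dimensional covering sums from below by the $\mu$-mass of that subset. We may assume $s>0$, since otherwise there is nothing to prove. Fix $0<\varepsilon<s$.

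The first step is to obtain the uniform estimate. For $k\in\N$ set
\[
E_k=\Bigl\{x\in E:\ \mu(B(x,r))\le r^{s-\varepsilon}\ \text{for all } 0<r<1/k\Bigr\}.
\]
By the hypothesis $\liminf_{r\to0}\log\mu(B(x,r))/\log r\ge s$, every $x\in E$ satisfies $\mu(B(x,r))\le r^{s-\varepsilon}$ for all sufficiently small $r$. Since $\log r<0$, this is exactly the inequality in the definition of $E_k$. Hence $E=\bigcup_k E_k$ and the sets $E_k$ increase in $k$.

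The second step is to find some $E_k$ of positive mass. Working with the outer measure $\mu^*$, countable subadditivity together with $\mu(E)>0$ gives some $k$ with $\mu^*(E_k)>0$. This sidesteps any question of whether $E_k$ is measurable. The measurability issue is the only mildly delicate point, and using outer measure handles it without further work.

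The third step is the covering estimate. Let $\{U_i\}$ be any countable cover of $E_k$ by sets of diameter $|U_i|<1/(2k)$, and discard those $U_i$ that do not meet $E_k$. For each remaining $i$, pick $x_i\in U_i\cap E_k$. If $|U_i|>0$, then $U_i\subset B(x_i,2|U_i|)$, and since $2|U_i|<1/k$ we get
\[
\mu^*(U_i)\le (2|U_i|)^{s-\varepsilon}.
\]
If $|U_i|=0$, then $U_i=\{x_i\}$ and $\mu(\{x_i\})\le\lim_{r\to0}r^{s-\varepsilon}=0$, so the same bound holds trivially. Summing over $i$,
\[
0<\mu^*(E_k)\le\sum_i\mu^*(U_i)\le 2^{s-\varepsilon}\sum_i|U_i|^{s-\varepsilon}.
\]
This lower bound holds for every such cover. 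Therefore $\mathcal H^{s-\varepsilon}(E_k)\ge 2^{-(s-\varepsilon)}\mu^*(E_k)>0$, which gives $\dimH E\ge\dimH E_k\ge s-\varepsilon$ by monotonicity of Hausdorff dimension. Letting $\varepsilon\to0$ yields $\dimH E\ge s$.
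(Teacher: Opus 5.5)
Your argument is correct, and it is essentially the standard proof behind the cited result in Falconer. You exhaust $E$ by the sets $E_k$ on which $\mu(B(x,r))\le r^{s-\varepsilon}$ holds uniformly for $r<1/k$, pick one of positive outer measure, and bound $\mathcal H^{s-\varepsilon}$-covering sums below by $\mu$ using balls centred in $E_k$; the paper states the lemma without proof and relies only on that citation.
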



\section{Cantor subset}\label{sec3}
In this section, we will construct a reasonable subset of $E(\psi_1)\times\cdots\times E(\psi_m)$, whose Hausdorff dimension attains the claimed lower bound of the theorem. To present more clearly how the one-dimensional construction extends to the Cartesian product setting, we will frequently use some notations introduced in Section~\ref{pre}.

For each $1\le i\le m$, let $\{\widetilde{Q}_n^{(i)}\}_{n\ge 1}$ be a sequence of integers such that
$$\lim_{n\rightarrow\infty}-\frac{\log \psi_i(\widetilde{Q}_n^{(i)})}{\log \widetilde{Q}_n^{(i)}}=\liminf_{q\rightarrow\infty}-\frac{\log \psi_i(q)}{\log q}:=\alpha_i.$$
Note that the proof below is also valid for the case of certain $\alpha_i=\infty$.
Let $\{c_{n}\}_{n\ge 1}$ be a sequence of increasing real numbers such that $c_{n}\rightarrow 1\text{ as }n\rightarrow \infty$ and $N$ be a sufficiently large integer. We choose sufficiently large integers $Q_{1-m},Q_{2-m}\ldots,Q_{0}$, which will serve as the initial lower bounds for the denominators of rationals to be avoided in the respective coordinate directions.
For any integer $p_{1-m}$ such that $\textrm{gcd}(p_{1-m},Q_{1-m})=1$, choose an arbitrary interval in the annulus $A_{c_1}(p_{1-m}/Q_{1-m})$ and denote it by $U_0$. We then take the starting ball as $B_0:=U_0^m$, i.e., the Cartesian product of $m$ copies of $U_0$.
Moreover, we denote the $i$-th coordinate
of the set $E\subset \mathbb{R}^m$ by $E\vert_i$ in the below sections.

Now, we begin to construct the $1$-st level of the Cantor set. At this level, we focus solely on approaching exact approximation in the first coordinate.
\begin{enumerate}
    \item {Dividing $B_0\vert_1$}: Applying Lemma \ref{WDP} to $B_0\vert_1$, there is $\widehat{Q}_1$ such that (\ref{f1}) holds for any $Q\ge \widehat{Q}_1$. Set $Q_-=Q_{1-m}$ and $\psi=\psi_1$ as in Section \ref{select}, and choose
$$Q_+\ge \widehat{Q}_1,\ Q_+\in\{\widetilde{Q}_n^{(1)}\}_{n\ge 1}$$
satisfying (\ref{lcondi}); define $Q_1=Q_+$. Then we get a collection of rationals $\mathcal{A}_{Q_1}(B_{0}\vert_1)$ satisfying the following properties:
        \begin{itemize}
  \item The gap between any two distinct rationals in the collection is at least $Q_1^{-2}$.
     \item The union of balls centered in the above collection with radius $\frac12Q_1^2$ are contained in $B_{0}\vert_1$.
    \item The number of these balls is asymptotically $\mathcal{L}(B_{0}\vert_1)\cdot Q_1^2$.
\end{itemize}
 These rational points almost form a partition of $B_0\vert_1$, i.e., the balls $$B\left(\frac{p}{q},\frac{1}{2Q_1^2}\right),\ \frac{p}{q}\in \mathcal{A}_{Q_1}(B_{0}\vert_1)$$
 are pairwise disjoint and contained in $B_{0}\vert_1$.
    \item {Select appropriate rationals}: We get rid of rationals in $\mathcal{A}_{Q_1}(B_{0}\vert_1)$ whose annulus intersects $B(p/q,c_1\psi(q))$ for some reduced rational point with denominator $q$ in the range $q_0\le q< Q_1$. By Lemma \ref{count}, the number of remaining rationals is asymptotically at least
    \begin{align}\label{ranumber}
   \mathcal{L}(B_{0}\vert_1)\cdot Q_1^2.
\end{align}
    Denote these rationals by $\overline{\mathcal{Q}}_{Q_1}(B_{0}\vert_1)$. The collection of balls
$$B\left(\frac{p}{q},\frac{1}{2Q_1^2}\right),\ \frac{p}{q}\in \overline{\mathcal{Q}}_{Q_1}(B_{0}\vert_1)$$
is denoted by $\mathcal{B}(B_0\vert_1)$. 
    \item Shrink to form annuli: Recall (\ref{psicondi}), together with the fact that the rationals in $\overline{\mathcal{Q}}_{Q_1}(B_0\vert_1)$ have denominator $q\ge Q_1N^{-1}$, we have
    $$\psi(q)<\frac{1}{2Q_1^2}.$$
    Therefore, one can shrink each ball in $\mathcal{B}(B_0\vert_1)$ so that its radius is reduced from
     $$1/2\cdot Q_1^2\textrm{ to }\psi(q).$$
    Then we form the corresponding annulus $A_{c_2}(p/q)$ from each shrunken ball. (We take $A_{c_2}(p/q)$ instead of $A_{c_1}(p/q)$ to count the set $\mathcal{N}_{Q_+,t}$ in Lemma \ref{count}.) These annuli are not only pairwise disjoint, but also disjoint from every ball of radius $c_1\psi(q)$ centered at rationals with denominator $Q_{1-m}\le q<Q_1$. Each annulus $A_{c_2}(p/q)$ is composed of two intervals. We select one of its two intervals arbitrarily and denote by $U_{c_2}(p/q)$. The collection of all such selected intervals is denoted by $\mathcal{U}_{Q_1}(B_0\vert_1)$.
    \item {Lifted to rectangles:} We lift each interval $U_{c_2}(p/q)$ in $\mathcal{U}_{Q_1}(B_0\vert_1)$ to $\R^m$ by taking its Cartesian product with $\prod_{j=2}^{m}B_0\vert_j$. 
    More precisely, we consider the following rectangles  $$R_{1}:=U_{c_2}(p/q)\times B_{0,2}\times\cdots\times B_{0,m}$$
    with $p/q\in \mathcal{Q}_{Q_1}(B_{0}\vert_1)$. Denote by $\mathcal{R}(B_0)$ the collection of these rectangles.
    \item Cut the rectangles: For each rectangle $R_1$ in $\mathcal{R}(B_0)$, we use Lemma \ref{cut} to cut the rectangle into balls, and then we get a collection $\mathcal{C}(R_{1})$ of well-separated balls of the form
    $$B_{1}:=U_{c_2}(p/q)\times I((1-c_2)\psi_1(q))\times\cdots I((1-c_2)\psi_1(q)),$$
    where $p/q\in \mathcal{Q}_{Q_1}(B_{0}\vert_1)$ and $I((1-c_2)\psi_1(q))$ denote an interval of length $(1-c_2)\psi_1(q)$. Moreover, these balls are contained in $R_{1}$, and the number of balls in $\mathcal{C}(R_{1})$ is asymptotically at least
    \begin{align}\label{est1}
        \prod_{j=2}^{m}\frac{\mathcal{L}(B_{0}\vert_j)}{(1-c_2)\psi_1(q)}.
        \end{align}

\end{enumerate}

By construction as above, the first level is
\begin{align*}
\mathcal{E}_1=\bigcup_{R_{1}\in\mathcal{R}(B_0)}\bigcup_{B_{1}\in\mathcal{C}(R_{1})}B_1.
\end{align*}

The remaining levels are now constructed inductively. Assume that the $(n-1)$-th level $\mathcal{E}_{n-1}$ has already been defined. To construct the $n$-th level $\mathcal{E}_n$, we write $n=tm+i$ with $1\le i\le m$ and $t\in \mathbb{N}$. Then, for every ball $B_{n-1}$ belonging to $\mathcal{E}_{n-1}$, we carry out the following analogous construction. At this level, we focus solely on approaching exact approximation in the $i$-th coordinate.

\begin{enumerate}
    \item {Divide $B_{n-1}\vert_i$}:
    Apply Lemma \ref{WDP} to $B_{n-1}\vert_i$, there is $\widehat{Q}_n$ such that (\ref{f1}) holds for any $Q\ge \widehat{Q}_n$. Set $Q_-=Q_{n-m}$ and $\psi=\psi_i$ as in Section 2.2, and choose
    \begin{align}\label{choiceQn}
Q_+\ge \max\{\widehat{Q}_n,e^{Q_{n-1}}\},\ Q_+\in\{\widetilde{Q}_n^{(i)}\}_{n\ge 1}
\end{align}
satisfying (\ref{lcondi}); define $Q_n=Q_+$. Then we obtain a collection of rationals $\mathcal{A}_{Q_n}(B_{n-1}\vert_i)$ such that
        \begin{itemize}
  \item The gap between any two distinct rationals in the collection is at least $Q_n^{-2}$.
     \item The union of balls centred in above collection with radius $\frac{1}{2}Q_n^{-2}$ are contained in $B_{n-1}\vert_i$.
    \item The number of these balls is asymptotically $\mathcal{L}(B_{n-1}\vert_i)\cdot Q_n^{2}$.
\end{itemize}
 Therefore, the balls $$B\left(\frac{p}{q},\frac{1}{2Q_n^2}\right),\ \frac{p}{q}\in \mathcal{A}_{Q_n}(B_{n-1}\vert_i)$$
 are pairwise disjoint and contained in $B_{n-1}\vert_i$.
    \item {Selected appropriate rationals}: We get rid of rationals in $\mathcal{A}_{Q_n}(B_{n-1}\vert_i)$ whose annulus intersects ball $B(p/q,c_n\psi(q))$ for some reduced rational point with denominator $q$ in the range $$Q_{n-m}\le q< Q_n.$$ 
        By Lemma \ref{count}, the number of remaining rationals is asymptotically at least
    \begin{align*}
   \mathcal{L}(B_{n-1}\vert_i)\cdot Q_n^2.
\end{align*}
    Denote by these rationals $\overline{\mathcal{Q}}_{Q_n}(B_{n-1}\vert_i)$. The collection of balls
$$B\left(\frac{p}{q},\frac{1}{2Q_n^2}\right),\ \frac{p}{q}\in \overline{\mathcal{Q}}_{Q_n}(B_{n-1}\vert_i)$$
is denoted by $\mathcal{B}(B_{n-1}\vert_i)$.
    \item Shrink to form annuli: Similarly, recall (\ref{psicondi}), together with the fact that the rationals in $\mathcal{Q}_{Q_n}(B_{n-1}\vert_i)$ have denominator $q\ge Q_n$, we have
    $$\psi(q)<\frac{1}{2 Q_n^2}.$$
    Therefore, one can shrink each ball in $\mathcal{B}(B_{n-1}\vert_i)$ so that its radius is reduced from
     $$1/2\cdot Q_n^{-2}\textrm{ to }\psi(q).$$
    and then form the corresponding annulus $A_{c_{t+1}}(p/q)$ from each shrunken ball. 
    We select one of its two intervals arbitrarily and denote by $U_{c_{t+1}}(p/q)$. The collection of all such selected intervals is denoted by $\mathcal{U}_{Q_n}(B_{n-1}\vert_i)$.
    \item {Lifting to rectangles:} For each interval $U_{c_{t+1}}(p/q)$ in $\mathcal{U}_{Q_n}(B_{n-1}\vert_i)$, we lift it in the $i$-th coordinate to $\R^m$ by taking its Cartesian product with $$\prod_{\substack{j=1,j\neq i}}^{m}B_{n-1}\vert_j.$$ More precisely, we replace the $i$-th coordinate of $B_{n-1}$ by $U_{c_{t+1}}(p/q)$, obtaining the rectangle $$R_{n}:=\underbrace{B_{n-1}\vert_1\times\cdots\times B_{n-1}\vert_{i-1}}_{i-1\text{ terms}} \times U_{c_{t+1}}(p/q)\times \underbrace{B_{n-1}\vert_{i+1}\times\cdots\times B_{n-1}\vert_m}_{m-i\text{ terms}}$$
    with $p/q\in \mathcal{Q}_{Q_n}(B_{n-1}\vert_i)$. Denote by $\mathcal{R}(B_{n-1})$ the collection of these rectangles.
    \item Cut the rectangles: Similarly, for each rectangle $R_n$ in $\mathcal{R}(B_{n-1})$, we use Lemma \ref{cut} to cut the rectangle into balls, and then we get a collection $\mathcal{C}(R_{n})$ of well-separated balls of the form
    $$B_{n}:=I((1-c_{t+1})\psi_i(q))^{i-1}\times U_{c_{t+1}}(p/q)\times I((1-c_{t+1})\psi_i(q))^{m-i},$$
    where $p/q\in \mathcal{Q}_{Q_n}(B_{n-1}\vert_i)$. Moreover, these balls are contained in $R_{n}$, and the number of balls in $\mathcal{C}(R_{n})$ is asymptotically 
    \begin{align}\label{est2}
        \prod_{j=1,j\neq i}^{m}\frac{\mathcal{L}(B_{n-1}\vert_j)}{(1-c_{t+1})\psi_i(q)}.
        \end{align}

\end{enumerate}

Therefore, the $n$-th level is
\begin{align*}
\mathcal{E}_n=\bigcup_{B_{n-1}\in\mathcal{E}_{n-1}}\bigcup_{R_{n}\in\mathcal{R}(B_{n-1})}\bigcup_{B_{n}\in\mathcal{C}(R_{n})}B_n.
\end{align*}
It follows that the Cantor subset is defined as
$$\mathcal{E}_{\infty}=\bigcap_{n=1}^{\infty}\mathcal{E}_n.$$

At the end of this section, we give an explanation on $$\mathcal{E}_{\infty}\subset E(\psi_1)\times\cdots\times E(\psi_m).$$
For any $x\in\mathcal{E}_{\infty}$, by the construction of annuli in each coordinate, it is clear that
$x_i\in W(\psi_i)$ for each $i=1,\ldots, m$.
For $i=1,\ldots,m$, it suffices to show that for any $c<1$, $x_i\notin W_i(c\psi_i)$.

Let $t_0\ge 1$ be a large enough integer such that $c_{t_0}>c$. For any $q>Q_{(t_0-2)m+i}$, there exists $t\ge t_0$ such that
\begin{align*}
Q_{(t-2)m+i}\le q<Q_{(t-1)m+i}.
\end{align*}
The balls constructed at the $[(t-1)m+i]$-th level have been constructed disjoint from the $c_t\psi_i(q)$-neighborhoods of rationals whose denominators is $q$. Since $c_{t_0}<c_t$, these balls also avoid those rationals at the smaller $c_{t_0}\psi_i(q)$-neighborhood. Consequently, each component $x_i\notin W(c_{t_0}\psi_i)$.  It then follows, $x_i\notin W(c\psi_i)$ for each $i=1,\ldots,m$ and any $c<1$.
\section{Mass distribution}
In this section, we define a probability measure $\mu$ supported on $\mathcal{E}_{\infty}$, where at each level the measure is uniformly distributed among the balls (or rectangles) according to their cardinality.

We begin to define the measure of balls in $\mathcal{E}_1$. Let $B_0$ be defined as in Section \ref{sec3} and $\mu(B_0)=1$.
\begin{enumerate}
    \item For each rectangle $R_{1}\in\mathcal{R}(B_0)$, we define
    \begin{align*}
    \mu({R}_{1})=\frac{1}{\#\mathcal{R}(B_0)}.
    \end{align*}
    \item For any $R_1\in\mathcal{R}(B_0)$, for each ball $B_1\in\mathcal{C}(R_1)$, we define
    \begin{align*}
        \mu({B}_{1})=\frac{1}{\#{\mathcal{C}}({R}_1)}\cdot\mu(R_1).
    \end{align*}
\end{enumerate}

Assume that the measure of balls in $\mathcal{E}_{n-1}$ for $n\ge 2$ has been defined, we now define the measure of balls in $\mathcal{E}_{n}$. Write $n=tm+i,$ where $1\le i\le m$ and $t\in\mathbb{N}$. For each $B_{n-1}$ belongs to $\mathcal{E}_{n-1}$,

\begin{enumerate}
    \item For each rectangle $R_{n}\in\mathcal{R}(B_{n-1})$, we define
    \begin{align*}
    \mu({R}_{n})=\frac{1}{\#\mathcal{R}(B_{n-1})}\cdot\mu(B_{n-1}).
    \end{align*}
    \item For any $R_n\in\mathcal{R}(B_{n-1})$, for each ball $B_n\in\mathcal{C}(R_n)$, we define
    \begin{align*}
        \mu({B}_{n})=\frac{1}{\#{\mathcal{C}}({R}_n)}\cdot\mu(R_n).
    \end{align*}
\end{enumerate}
By Kolmogorov's consistency theorem, the set function $\mu$ can be uniquely extended into a probability measure supported on $\mathcal{E}_{\infty}$. 

Additionally, for any $B_n\in\mathcal{E}_n$, the construction in Section \ref{sec3} provides a nested sequence
$$B_0\supset R_1\supset B_1\supset R_2\supset B_2\supset\cdots,$$
with $R_n\in\mathcal{R}(B_{n-1})$ and $B_n\in\mathcal{C}(R_n)$ for any $n\ge 1$. Let $q_n$ be the denominator of the rational point corresponding to $B_n\subset R_n$, denote $t_n=\lfloor n/m\rfloor$ and $i_n=n-mt_n$, write $\psi_{i_n}=\psi_i$ for $i\equiv i_n \ (\text{mod } m)$ in the representation system $i\in \{1,2,\ldots,n\}$. Combined with (\ref{est1}) and (\ref{est2}), we have that
\begin{align}\label{ieq:iter}
\mu(B_n)&=(1-c_{t_n+1})^{m-1}\psi_{i_n}(q_n)^{m-1}Q_n^{-2}\cdot\frac{\mu(B_{n-1})}{\mathcal{L}^m(B_{n-1})}.
\end{align}
Therefore,  we conclude that
$$\frac{\log\mu(B_{n})}{\log \gamma(B_{n})}=\underbrace{\frac{\log ((1-c_{t_n+1})^{m-1}\psi_{i_n}(q_n)^{m-1}Q_n^{-2})}{\log((1-c_{t_n+1})\psi_{i_n}(q_n))}}_{\text{denote by (I)}}+\underbrace{\frac{\log (\mu(B_{n-1})/\mathcal{L}^m(B_{n-1}))}{\log((1-c_{t_n+1})\psi_{i_n}(q_n))}}_{\text{denote by (II)}}.$$
For the term $(\text{I})$, by the monotonicity of $\psi_i$ and the choice of $Q_n$ in (\ref{choiceQn}), we have following estimate
   $$ (\text{I})\ge m-1+\frac{\log Q_n^{-2}}{\log( (1-c_{t+1})\psi_i(Q_n))}\ge m-1+\min_{1\le i\le m}\left\{\frac{2}{\alpha_i}\right\}.$$
For the term $(\text{II})$, by iterating (\ref{ieq:iter}), one concludes that
\begin{align*}
    (\text{II})&\ge \frac{\sum_{j=1}^{n-1}\log ((1-c_{t_j+1})^{-1}\psi_{i_j}(q_j)^{-1}Q_j^{-2})+\log {\gamma(B_0)^{-m}}}{\log((1-c_{t_n+1})\psi_{i_n}(q_n))}-o(1).
\end{align*}
By (\ref{condithm1}), the monotonicity of $\psi_i$ and the choice of $Q_n$ in (\ref{choiceQn}), it follows that $(\text{II})$ tends to $0$. Therefore, one has
\begin{align}\label{result_special}
    \liminf_{n\rightarrow\infty}\frac{\log\mu(B_{n})}{\log \gamma(B_{n})}\ge m-1+\min_{1\le i\le m}\left\{\frac{2}{\alpha_i}\right\}:=s_0.
\end{align}

\section{Proof of Theorem \ref{thm:main}}\label{sec:proof}
Consider a general ball $$B(x,r)=\prod_{i=1}^{m}B(x_i,r),$$ where $x\in\mathcal{E}_{\infty}$ and $r$ sufficiently small. Together with Lemma \ref{MDP}, we finish the proof of Theorem \ref{thm:main} by estimating the $\mu$ measure of $B(x,r)$.

Let $n$ be the smallest integer for which the ball $B(x,r)$ can intersect only one ball in $\mathcal{E}_{n-1}$ but intersects at least two balls in $\mathcal{E}_{n}$, we write $n=tm+i$ for some $1\le i\le m$ and $t\in\mathbb{N}$. The measure of $B(x,r)$ will be discussed below in several parts.
\begin{itemize}
\item[(1)] If $r\ge \gamma(B_{n-1})$, then we have that
$$\mu(B(x,r))\le\mu(B_{n-1}).$$
Therefore, for any $\epsilon>0$ and $r$ small enough, by (\ref{result_special}), it follows that 
\begin{align}\label{r1}
\frac{\log\mu(B(x,r))}{\log r}\ge\frac{\log \mu(B_{n-1})}{\log \gamma(B_{n-1})}\ge s_0-\epsilon.
\end{align}
\item[(2)] If $Q_n^{-2}\le r<\gamma(B_{n-1})$, the ball $B(x,r)$ can intersect at least two lifted rectangles contained in $B_{n-1}$. Let $R_n$ be a such lifted rectangle which intersect the ball $B(x,r)$, we have that
    $$R_n\cap B(x,r)\subset \prod_{j=1}^{i-1}B(x_j,2r)\times R_n\vert_i\times \prod_{j=i+1}^{m}B(x_j,2r).$$
    Then, combined with the volume argument, the number of balls $B_n$ in $R_n$ which intersect $B(x,r)$ is at most
    $$\prod_{\substack{1\le j\le m\\j\neq i}}\frac{2r}{|R_n\vert_i|}\asymp \left(\frac{r}{(1-c_{t+1})\psi_i(q)}\right)^{m-1},$$
    where $q$ is the denominator of the rational point associated with $R_n$. Consequently, the measure of $B(x,r)\cap R_n$ is bounded by
    \begin{equation}\label{single}
    \begin{split}
    \mu(B(x,r)\cap R_n)&\le\sum_{\substack{B_n\subset R_n:B_n\cap B(x,r)\neq\emptyset}}\mu(B_n)\\
    &\le \#\big\{B_n\subset R_n:B_n\cap B(x,r)\neq\emptyset\big\}\cdot\mu(B_{n})\\
    &\le r^{m-1}\cdot Q_n^{-2}\cdot\frac{\mu(B_{n-1})}{\mathcal{L}^m(B_{n-1})},
    \end{split}
    \end{equation}
    where $\mu(B_n)$ is equal for any $B_n\subset R_n$ in second inequality. In this case, the number of lifted rectangles in $\mathcal{R}(B_{n-1})$ that intersect with $B(x,r)$ is asymptotically $r\cdot Q_n^2$. Therefore, we have
    \begin{align*}
    \mu(B(x,r))&\le\sum_{R_n\subset \mathcal{R}(B_{n-1}):R_n\cap B(x,r)\neq\emptyset}\mu(B(x,r)\cap R_n)\\
    &\lesssim r\cdot Q_n^2\cdot r^{m-1}\cdot Q_n^{-2}\cdot\frac{\mu(B_{n-1})}{\mathcal{L}^m(B_{n-1})}\\
    &\le r^{m}\cdot\frac{\mu(B_{n-1})}{\mathcal{L}^m(B_{n-1})}.
    \end{align*}
    It implies that for any $\epsilon>0$ and $r$ small enough,
    \begin{equation}\label{r2}
    \begin{split}
    \frac{\log\mu(B(x,r))}{\log r}&\ge m+\frac{\log (\mu(B_{n-1})/\mathcal{L}^m(B_{n-1}))}{\log r}\\
    &\ge m+\frac{\log( \mu(B_{n-1})/\mathcal{L}^m(B_{n-1}))}{\log Q_n^{-2}}\\
    &\ge m-\epsilon,
    \end{split}
    \end{equation}
    where the last inequality holds by the choice of $Q_n$.
    \item[(3)] If $r<Q_n^{-2}$, the ball can only intersect one lifted rectangle $R_n$ in $B_{n-1}$. Let $p/q$ be the rational point associated with that rectangle. Since the ball $B(x,r)$ 
     intersects two balls in $\mathcal{E}_n$, we have $r>(1-c_{t+1})\psi_i(q)$. 
    Recall that in (\ref{single}), we have that
        $$\mu(B(x,r))=\mu(B(x,r)\cap R_n)\le r^{m-1}\cdot Q_n^{-2}\cdot\frac{\mu(B_{n-1})}{\mathcal{L}^m(B_{n-1})}.$$
        Therefore, for any $\epsilon>0$ and $r$ small enough, we have that
        \begin{equation}\label{r3}
        \begin{split}
        \frac{\log\mu(B(x,r))}{\log r}&\ge m-1+\frac{-2\log Q_n+\log( \mu(B_{n-1})/\mathcal{L}^m(B_{n-1}))}{\log ((1-c_{t+1})\psi_i(Q_n))}\\
        &\ge m-1+\frac{2}{\alpha_i}-\epsilon,
        \end{split}
        \end{equation}
        by the choice of $Q_n$ and the definition of $\alpha_i$.
\end{itemize}

In other words, we conclude that
$$\liminf_{r\rightarrow 0}\frac{\log \mu(B(x,r))}{\log r}\ge s_0.$$
By Lemma \ref{MDP}, one immediately obtain that
$$\dimH E(\psi_1)\times\cdots\times E(\psi_m)\ge s_0.$$
The upper bound
\[
\dimH\bigl(E(\psi_1)\times\cdots\times E(\psi_m)\bigr)\leq s_0
\]
follows immediately from the inclusion
\[
E(\psi_1)\times\cdots\times E(\psi_m)
\subset
W(\psi_1)\times\cdots\times W(\psi_m),
\]
together with the result of Wang-Wu as given in \eqref{eq:e2}.

\medskip

\noindent{\bf Acknowledgments.} The first and third-named authors are supported by the National Key R\&D Program of China (No. 2024YFA1013700) and Natural Science Foundation of China (No. 12331005). We thank Dr Benjamin Ward for useful discussions. Part of this work was done when Mumtaz and Johannes visited the MATRIX Research Institute (Victoria). We thank the institute for its hospitality and funding.



\begin{thebibliography}{10}

\bibitem{BakerWard2025}
S.~Baker and B.~Ward.
\newblock A quantitative framework for sets of exact approximation order by
  rational numbers.
\newblock 2025.
\newblock Preprint, arXiv:2510.18451.

\bibitem{BandiDeSaxce2025}
P.~Bandi and N.~de~Saxc\'e.
\newblock Hausdorff dimension and exact approximation order in {$\Bbb R^n$}.
\newblock {\em Ann. Sci. \'Ec. Norm. Sup\'er. (4)}, 58(4):831--853, 2025.

\bibitem{BandiGhoshNandi}
P.~Bandi, A.~Ghosh, and D.~Nandi.
\newblock Exact approximation order and well-distributed sets.
\newblock {\em Adv. Math.}, 414:Paper No. 108871, 19, 2023.

\bibitem{BeresnevichDickinsonVelani}
V.~Beresnevich, D.~Dickinson, and S.~Velani.
\newblock Sets of exact `logarithmic' order in the theory of {D}iophantine
  approximation.
\newblock {\em Math. Ann.}, 321(2):253--273, 2001.

\bibitem{BDV}
V.~Beresnevich, D.~Dickinson, and S.~Velani.
\newblock Measure theoretic laws for lim sup sets.
\newblock {\em Mem. Amer. Math. Soc.}, 179:no. 846, x+91 pp., 2006.

\bibitem{BroderickKleinbock}
R.~Broderick and D.~Kleinbock.
\newblock Dimension estimates for sets of uniformly badly approximable systems
  of linear forms.
\newblock {\em Int. J. Number Theory}, 11(7):2037--2054, 2015.

\bibitem{Bugeaud2003}
Y.~Bugeaud.
\newblock Sets of exact approximation order by rational numbers.
\newblock {\em Mathematische Annalen}, 327(1):171--190, 2003.

\bibitem{Bugeaud2008}
Y.~Bugeaud.
\newblock Sets of exact approximation order by rational numbers. {II}.
\newblock {\em Uniform Distribution Theory}, 3(2):9--20, 2008.

\bibitem{BugeaudMoreira2011}
Y.~Bugeaud and C.~G. Moreira.
\newblock Sets of exact approximation order by rational numbers {III}.
\newblock {\em Acta Arithmetica}, 146(2):177--193, 2011.

\bibitem{DRV2}
M.~M. Dodson, B.~P. Rynne, and J.~A.~G. Vickers.
\newblock Metric {D}iophantine approximation and {H}ausdorff dimension on
  manifolds.
\newblock {\em Math. Proc. Cambridge Philos. Soc.}, 105(3):547--558, 1989.

\bibitem{Erdos1962}
P.~Erd\"os.
\newblock Representation of real numbers as sums and products of {L}iouville
  numbers.
\newblock {\em Michigan Mathematical Journal}, 9:59--60, 1962.

\bibitem{Falconer_book2013}
K.~Falconer.
\newblock {\em Fractal geometry}.
\newblock John Wiley \& Sons, Ltd., Chichester, 2014.
\newblock Mathematical foundations and applications.

\bibitem{KLWZ}
H.~Koivusalo, J.~Levesley, B.~Ward, and X.~Zhang.
\newblock The dimension of the set of {$\psi$}-badly approximable points in all
  ambient dimensions: on a question of {B}eresnevich and {V}elani.
\newblock {\em Int. Math. Res. Not. IMRN}, (14):10822--10843, 2024.

\bibitem{Marstrand}
J.~M. Marstrand.
\newblock Some fundamental geometrical properties of plane sets of fractional
  dimensions.
\newblock {\em Proc. London Math. Soc. (3)}, 4:257--302, 1954.

\bibitem{MauldinUrbanski4}
R.~D. Mauldin and M.~Urba{\'n}ski.
\newblock Conformal iterated function systems with applications to the geometry
  of continued fractions.
\newblock {\em Trans. Amer. Math. Soc.}, 351:no. 12, 4995--5025, 1999.

\bibitem{Joh2023RM}
J.~Schleischitz.
\newblock Metric results on sumsets and {C}artesian products of classes of
  {D}iophantine sets.
\newblock {\em Results Math.}, 78(6):Paper No. 215, 34, 2023.

\bibitem{Joh2025MZ}
J.~Schleischitz.
\newblock The set of {$\Phi $} badly approximable matrices has full {H}ausdorff
  dimension.
\newblock {\em Math. Z.}, 309(4):Paper No. 73, 18, 2025.

\bibitem{Tricot1982}
C.~Tricot, Jr.
\newblock Two definitions of fractional dimension.
\newblock {\em Math. Proc. Cambridge Philos. Soc.}, 91(1):57--74, 1982.

\bibitem{WangWu19}
B.~Wang and J.~Wu.
\newblock Mass transference principle from rectangles to rectangles in
  {D}iophantine approximation.
\newblock {\em Math. Ann.}, 381(1-2):243--317, 2021.

\bibitem{WangWuTAMS}
B.~Wang and J.~Wu.
\newblock Hausdorff dimension of the {C}artesian product of limsup sets in
  {D}iophantine approximation.
\newblock {\em Trans. Amer. Math. Soc.}, 377(5):3727--3748, 2024.

\end{thebibliography}

\end{document}